\documentclass[
amsmath,amssymb,
]{revtex4-2}
\usepackage{hyperref}
\usepackage{ulem}
\usepackage{xcolor}
\usepackage{graphicx}
\usepackage{dcolumn}
\usepackage{bm}
\usepackage{amsthm}
\newcounter{mainEq}

\usepackage{appendix}

\newtheorem{theorem}{Theorem}
\newtheorem{lemma}{Lemma}
\newtheorem{corollary}{Corollary}

\begin{document}

\title{Watanabe--Strogatz Invariants in the Liouvillian Dynamics of Coupled Phase Oscillators via the Koopman Framework}

\author{Keisuke Taga$^{1}$, and Hiroya Nakao$^{2}$}

\address{$^{1}$Department of Physics and Astronomy, Tokyo University of Science, Chiba 278-8510, Japan\\
$^{2}$Department of Systems and Control Engineering and Research Center for Autonomous Systems Materialogy, Institute of Science Tokyo, Tokyo 152-8552, Japan}

\email{tagakeisuke@rs.tus.ac.jp}

\begin{abstract}
In dynamical systems, invariants, i.e., constants of motion conserved along the trajectory, play important roles in characterizing the system's dynamical behavior. Recent applications of the Koopman operator framework to nonlinear dynamical systems have provided new insights into the invariants. For a certain class of globally coupled phase oscillators, which serve as models for various synchronization phenomena, Watanabe and Strogatz proved the existence of $N-3$ invariants in $N$ oscillator systems.
In this study, we derive these invariants
from an operator-theoretic perspective by exploiting the relation between 
Liouvillian (Perron--Frobenius) and Koopman descriptions of the dynamics.
Exploiting a simple multiplicative property of functions under the action of the Liouvillian and Koopman operators, we explicitly construct a family of functions whose ratios yield the invariants of the underlying dynamics.
Our analysis successfully reproduces the full set of $N-3$ invariants known in Watanabe--Strogatz theory, and offers an alternative spectral perspective.
We demonstrate this approach for a well-studied class of phase models, including the Ermentrout--Kopell, pairwise Kuramoto, and higher-order Kuramoto models. 
\end{abstract}

\maketitle

\section{Introduction}

Self-sustained oscillations are ubiquitous in the real world,
including chemical oscillations~\cite{prigogine1968symmetry,Kuramoto1984chemical,winfree1967phase}, neuronal dynamics~\cite{fitzhugh1955threshold,nagumo1962pulse,ermentrout1986parabolic}, 
circadian rhythms~\cite{Goldbeter1995circadian,kori2017jetlag}, and
Josephson junction arrays~\cite{Watanabe1994constants}.
These oscillations emerge from various elementary processes, but they can often be 
reduced to one-dimensional dynamics by focusing on their oscillation phases~\cite{winfree1967phase,Kuramoto1984chemical,sakaguchi1986soluble,daido1992order,mauroy2013isostables,nakao2016phase,shirasaka2017phase,yawata2024autoencoder,namura2026arbitrary,ozawa2026delay}. 
 
A mathematical model that exclusively focuses on the phase dynamics of an oscillator is called a \textit{phase model}~\cite{winfree1967phase}.
In this study, we focus on a well-studied class of phase models represented in the following form:
\begin{subequations}\label{eq:base_equation}
    \begin{align}
        \frac{d\theta}{dt} &= f(t) + g(t) \cos \theta + h(t) \sin \theta,\label{eq:time_dependent_equation}
\end{align}
where $\theta(t) \in [0, 2\pi]$ is the oscillator phase ($0$ and $2\pi$ are identified) at time $t$ and $f(t)$, $g(t)$, and $h(t)$ are smooth functions of $t$.
This class of systems has been studied extensively since the pioneering work by Watanabe and Strogatz~\cite{Watanabe1993integrability,Watanabe1994constants}.
We are typically interested in a population of globally coupled oscillators, where each oscillator obeys Eq.~\eqref{eq:time_dependent_equation} and interacts with other oscillators through the functions $f$, $g$, and $h$.
The equation for the phase $\theta_j \in [0, 2\pi]$ of the $j$th oscillator ($j=1, \ldots, N$), where $N$ is the total number of oscillators, is then explicitly given by
\begin{align}
        \frac{d\theta_j}{dt} &= f(\bm{\theta}_N, t) + g(\bm{\theta}_N, t) \cos \theta_j + h(\bm{\theta}_N, t) \sin \theta_j,
   \label{eq:state_dependent_equation}
    \end{align}
\end{subequations}
where $\bm{\theta}_N = (\theta_1, \theta_2, \ldots, \theta_N)$. 
Well-known examples of globally coupled oscillators described by Eq.~\eqref{eq:state_dependent_equation} include the Winfree and Kuramoto models~\cite{winfree1967phase,Kuramoto1984chemical}, 
which have been extensively studied to investigate collective synchronization phenomena in coupled-oscillator systems.

For $N$ identical oscillators, 
both Eq.~\eqref{eq:time_dependent_equation} for 
independent oscillators and Eq.~\eqref{eq:state_dependent_equation} for 
coupled oscillators
admit at least $N-3$ invariants, i.e., constants of motion 
or conserved quantities, as shown by Watanabe and Strogatz in~\cite{Watanabe1993integrability}. 
These invariants play an essential role in reducing the dimensionality and in establishing the integrability of the system described by Eq.~\eqref{eq:time_dependent_equation} or~\eqref{eq:state_dependent_equation}.
Using these invariants, the $N$-dimensional dynamics of Eq.~\eqref{eq:time_dependent_equation} or~\eqref{eq:state_dependent_equation} can be
reduced to a system with only three degrees of freedom via the Watanabe--Strogatz transformation~\cite{Watanabe1993integrability},
which was later shown to be a type of M\"{o}bius transformation~\cite{Marvel2009identical}.
This Watanabe--Strogatz framework for dimensionality reduction has since been extended to phase-oscillator systems consisting of subpopulations~\cite{Pikovsky2008partially,hong2011conformists}, under periodic forcing~\cite{atsumi2012persistent}, high-dimensional Kuramoto models~\cite{lohe2018vector}, kinetic vector models~\cite{park2022kinetic}, and higher-order coupled models~\cite{jain2025higher}.
In addition, these invariants are also analyzed from the perspective of the Riccati equation~\cite{reid1972riccati} associated with Eq.~\eqref{eq:base_equation}~\cite{Marvel2009identical,cestnik2024integrability,pazo2025spiking}. 

In this study, we explore the invariants of the above class of coupled-oscillator systems by using the eigenfunctions of the Koopman and Perron--Frobenius operators, which are adjoint to each other.
The Koopman operator~\cite{Koopman1931hamiltonian,Koopman1932spectra,lasota2013chaos,mezic2005spectral,budivsic2012applied,mezic2013analysis,mauroy2020koopman,brunton2022modern} is a linear operator that governs the time evolution of observables for dynamical systems, while the Perron--Frobenius operator~\cite{mezic2005spectral,Gaspard_1998,lasota2013chaos,brunton2022modern} is a linear operator describing the evolution of the state density of dynamical systems.
This framework enables the study of nonlinear systems through linear techniques, such as spectral analysis. For example, the dynamical properties of a system, including its invariants, can be characterized by the eigenfunctions of the Koopman operator~\cite{budivsic2012applied,mauroy2013isostables,shirasaka2017phase,taga2021koopman,parker2023koopman,taga2024dynamic}.

Koopman operator theory has recently been applied to analyze coupled phase oscillators
in several studies
~\cite{susuki2016power,hu2020synchronization,wang2021probing,mihara2022basin,thibeault2025kuramoto}. 
In this study, we investigate the invariants of the systems given by Eq.~\eqref{eq:time_dependent_equation} or~\eqref{eq:state_dependent_equation}
by exploiting a simple relation that holds between the Perron--Frobenius and Koopman eigenfunctions.
We present a new path to deriving Koopman eigenfunctions using Perron--Frobenius eigenfunctions, which provides 
an alternative viewpoint on the invariants of globally coupled phase oscillators.

\section{A general relation between Perron--Frobenius and Koopman eigenfunctions}
\subsection{Preliminaries}

For a continuous dynamical system of the form 
\begin{align}
    \label{eq:ode} 
    \frac{d\bm{x}}{dt} = \bm{F}(\bm{x}, t),
\end{align}
where $\bm{x}(t) \in {\mathbb R}^N$ is the state of the system at time $t$ and ${\bm F}(\bm{x}, t) \in {\mathbb R}^N$ is the vector field representing the dynamics, the continuous evolution of an observable $u(\bm{x}) : {\mathbb R}^N \to {\mathbb C}$ 
is given by
\begin{align}
    \label{eq:Koopman_operator}
   \frac{d}{dt}u (\bm{x}) =  (\mathcal{K} u) (\bm{x}) = \bm{F}(\bm{x}, t) \cdot \nabla u(\bm{x}),
\end{align}
where $\mathcal{K} = \bm{F}(\bm{x}, t) \cdot \nabla$ is the infinitesimal generator of the Koopman operator (Koopman generator)~\cite{brunton2022modern} and $\nabla = \partial / \partial \bm{x}$ represents the gradient operator.
The infinitesimal generator $\mathcal{P}$ of the Perron--Frobenius (PF) operator for continuous-time systems is the adjoint of $\mathcal{K}$,
\begin{align}
    \label{eq:PF_operator}
    \mathcal{P}v(\bm{x}) = -\nabla \cdot \left( \bm{F}(\bm{x}, t) v(\bm{x}) \right) 
\end{align}
which is also known as the Liouville operator~\cite{Gaspard_1998,kato2021asymptotic, brunton2022modern}.
Here, the adjoint is defined with respect to the $L^2$ inner product of two complex functions $u(\bm{x})$ and $v(\bm{x})$ on the state space (e.g., on the $N$-torus when ${\bm x}$ represents the phase variables),
\begin{align}
    \langle u,\mathcal{K}v\rangle = \langle \mathcal{P}u,v\rangle,\quad \langle u, v\rangle = \int u(\bm{x}) \overline{v(\bm{x})}\ d\bm{x},
\end{align}
where the overline indicates complex conjugate.
While $\mathcal{K}$ describes the time evolution of an observable function, 
$\mathcal{P}$ has the physical meaning that it describes the time evolution of 
a probability density function $\rho({\bm x}, t)$ of the system state that is transported by the flow generated by Eq.~\eqref{eq:ode},
which obeys the Liouville equation,
\begin{align}
 \label{eq:continuity_eq}
 \frac{\partial}{\partial t}\rho({\bm x}, t) = \mathcal{P}\rho({\bm x}, t).
 \end{align}
We note that the Koopman and PF operators 
can generally be defined in the 
discrete-time setting, but we consider only their continuous-time generators in this study.

In what follows, we refer to the eigenfunction $u_\lambda(\bm{x})$ and eigenvalue $\lambda \in \mathbb{C}$ of $\mathcal{K}$ satisfying $\mathcal{K} u_\lambda(\bm{x}) = \lambda u_\lambda(\bm{x})$ as the \textit{Koopman eigenfunction} and \textit{eigenvalue}, and the eigenfunction $v_\mu(\bm{x})$ and eigenvalue $\mu \in \mathbb{C}$ of $\mathcal{P}$ satisfying $\mathcal{P} v_\mu(\bm{x}) = \mu v_\mu(\bm{x})$ as the \textit{Perron--Frobenius (PF) eigenfunction} and \textit{eigenvalue}. 
Note that the eigenvalues of $\mathcal{K}$ and $\mathcal{P}$ are related through the adjointness~\cite{brunton2022modern}.
In particular, from Eq.~\eqref{eq:Koopman_operator}, a Koopman eigenfunction with eigenvalue $\lambda = 0$ of $\mathcal{K}$ is an invariant of Eq.~\eqref{eq:ode}.

We begin our analysis of the relationship between the Koopman and PF eigenfunctions for general dynamical systems given in the form of Eq.~\eqref{eq:ode} with the following lemma.
Note that $\mathcal{K}$ and $\mathcal{P}$ are generally $t$-dependent.
\begin{lemma}
    \label{lem:1}
    Let $u_1(\bm{x})$ and 
    $u_2(\bm{x})$ be scalar-valued functions, where 
    $u_2({\bm x}) / u_1({\bm x})$ is finite, and define two functions
    \begin{align}
    \Lambda_1(\bm{x}, t) := \frac{\mathcal{P}u_1(\bm{x})}{u_1(\bm{x})}, \quad \Lambda_2(\bm{x},t) := \frac{\mathcal{P}u_2(\bm{x})}{u_2(\bm{x})}.
    \end{align}
    Then the following identity holds: 
    \begin{align}
        \mathcal{K} \left( \frac{u_2(\bm{x})}{u_1(\bm{x})} \right) = \left\{ \Lambda_1(\bm{x},t) - \Lambda_2(\bm{x},t) \right\} \frac{u_2(\bm{x})}{u_1(\bm{x})}.
    \end{align}
\end{lemma}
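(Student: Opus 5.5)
The plan is a direct computation from the definitions of $\mathcal{K}$ and $\mathcal{P}$. The Koopman generator $\mathcal{K}=\bm{F}\cdot\nabla$ is a first-order differential operator, so it obeys the Leibniz rule. The Liouville operator $\mathcal{P}$ can be written by expanding the divergence:
\begin{align}
\mathcal{P}v = -\nabla\cdot(\bm{F}v) = -(\nabla\cdot\bm{F})\,v - \bm{F}\cdot\nabla v = -(\nabla\cdot\bm{F})\,v - \mathcal{K}v .
\end{align}
Dividing by $v$ wherever $v\neq 0$ gives
\begin{align}
\frac{\mathcal{P}v}{v} = -\nabla\cdot\bm{F} - \frac{\mathcal{K}v}{v}.
\end{align}
Applying this with $v=u_1$ and $v=u_2$ yields $\Lambda_i = -\nabla\cdot\bm{F} - \mathcal{K}u_i/u_i$ for $i=1,2$. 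The key observation is that the divergence term is the same for both functions, so it cancels in the difference:
\begin{align}
\Lambda_1-\Lambda_2 = \frac{\mathcal{K}u_2}{u_2} - \frac{\mathcal{K}u_1}{u_1}.
\end{align}

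Next, compute $\mathcal{K}(u_2/u_1)$ using the quotient rule for the derivation $\mathcal{K}$:
\begin{align}
\mathcal{K}\left(\frac{u_2}{u_1}\right) = \frac{\mathcal{K}u_2}{u_1} - \frac{u_2\,\mathcal{K}u_1}{u_1^2} = \left(\frac{\mathcal{K}u_2}{u_2} - \frac{\mathcal{K}u_1}{u_1}\right)\frac{u_2}{u_1}.
\end{align}
This is exactly the difference $\Lambda_1-\Lambda_2$ obtained above, multiplied by $u_2/u_1$, which proves the claimed identity. The time variable $t$ enters only as a parameter in $\bm{F}$, and no step requires the operators to be time-independent.

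The only delicate point is the domain of validity. The identity is pointwise and holds wherever $u_1$ and $u_2$ are nonzero and differentiable. The hypotheses that $\Lambda_1$ and $\Lambda_2$ are defined and that $u_2/u_1$ is finite implicitly require $u_1,u_2\neq 0$, at least on the set where the identity is asserted, so I would state that restriction explicitly. If $u_2$ vanishes at some points, the intermediate expression $\mathcal{K}u_2/u_2$ is singular there, but one can avoid dividing by $u_2$ altogether. In the form $\mathcal{K}(u_2/u_1) = (\Lambda_1 u_2 - \mathcal{P}u_2)/u_1$, obtained from $\Lambda_2 u_2 = \mathcal{P}u_2$, only division by $u_1$ is needed. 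This form extends the identity by continuity to such points.
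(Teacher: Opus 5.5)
Your proof is correct and takes essentially the same route as the paper. It applies the quotient rule for $\mathcal{K}=\bm{F}\cdot\nabla$ and the identity $\mathcal{P}v=-(\nabla\cdot\bm{F})v-\mathcal{K}v$, so that the divergence terms cancel in $\Lambda_1-\Lambda_2$; the only difference is that you do the cancellation before the quotient rule rather than after.
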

\begin{proof} 
   \begin{align}
    \mathcal{K} \left( \frac{u_2}{u_1} \right)
    &= \bm{F}
    \cdot \nabla \left( \frac{u_2}{u_1} \right)  
    = \frac{u_1 \left( \bm{F} \cdot \nabla u_2 \right) - u_2 \left( \bm{F} \cdot \nabla u_1 \right)}{u_1^2} \cr
    &= \frac{1}{ u_1^2} \big[ u_1 \left\{ -\mathcal{P} u_2 - (\nabla\cdot\bm{F})u_2  \right\}  - u_2 \left\{ -\mathcal{P} u_1 - (\nabla\cdot\bm{F})u_1 \right\} \big] \cr
   &= \frac{1}{u_1^2} \big\{ u_1 (-\mathcal{P} u_2) - u_2 (-\mathcal{P} u_1) \big\}      = (\Lambda_1  - \Lambda_2 )\, \frac{u_2}{u_1}.
\end{align} 
\end{proof}
Note that $\Lambda_1$ and $\Lambda_2$ are generally not eigenvalues of $\cal{P}$ as they are point-wise functions of ${\bm x}$.
From Lemma~\ref{lem:1}, we immediately obtain the following theorem. 
\begin{theorem}
    \label{thm:1}
    Let $u_1(\bm{x})$ and $u_2(\bm{x})$ be general scalar-valued functions satisfying
    \begin{align}
    \frac{\mathcal{P}u_1}{u_1} = \frac{\mathcal{P}u_2}{u_2}.
    \end{align}
    Then their ratio 
   ${u_2(\bm{x})} / {u_1(\bm{x})}$ is a Koopman eigenfunction of $\mathcal{K}$ corresponding to the eigenvalue $0$, and thus represents an invariant of the system.
\end{theorem}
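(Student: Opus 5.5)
The plan is to derive Theorem~\ref{thm:1} directly from Lemma~\ref{lem:1}. I will define $\Lambda_1 := \mathcal{P}u_1/u_1$ and $\Lambda_2 := \mathcal{P}u_2/u_2$ as in the lemma. These are in general pointwise functions of $(\bm{x},t)$, not constants. The hypothesis of the theorem says exactly that $\Lambda_1(\bm{x},t)=\Lambda_2(\bm{x},t)$ wherever both are defined. I will assume, as in the lemma, that $u_1$ and $u_2$ are nonvanishing on the region considered and that the ratio $u_2/u_1$ is finite and differentiable there.

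Substituting into the identity of Lemma~\ref{lem:1} gives
\begin{align}
\mathcal{K}\left(\frac{u_2}{u_1}\right) = (\Lambda_1-\Lambda_2)\,\frac{u_2}{u_1} = 0 = 0\cdot \frac{u_2}{u_1},
\end{align}
so $u_2/u_1$ satisfies the Koopman eigenvalue equation with $\lambda=0$. Since $\mathcal{K}$ may depend on $t$, I will note that the identity holds for every $t$. Moreover, $u_2/u_1$ does not depend explicitly on $t$. Hence, along any solution $\bm{x}(t)$ of Eq.~\eqref{eq:ode}, the chain rule gives
\begin{align}
\frac{d}{dt}\frac{u_2(\bm{x}(t))}{u_1(\bm{x}(t))} = \bm{F}\cdot\nabla\left(\frac{u_2}{u_1}\right) = 0,
\end{align}
which is precisely the statement that the ratio is conserved, i.e.\ an invariant.

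The only delicate point is the domain on which $u_1$ and $u_2$ are nonzero, since that is where $\Lambda_i$ and the ratio make sense. I would state that the conclusion holds on each connected open set where $u_1\neq 0$ and $u_2\neq 0$. I would also remark that a trajectory starting in such a set remains there. The reason is that, by the computation above, $u_2/u_1$ is constant along the trajectory and hence stays away from both $0$ and $\infty$. Beyond this bookkeeping the argument is immediate from the lemma, so I expect no substantive obstacle.
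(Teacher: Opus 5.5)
Your argument is correct and matches the paper's proof: the hypothesis says $\Lambda_1=\Lambda_2$ pointwise, so Lemma~\ref{lem:1} gives $\mathcal{K}(u_2/u_1)=0$, and your chain-rule remark correctly uses that $u_2/u_1$ has no explicit $t$-dependence. Your optional domain remark, which the theorem does not need, is wrong as stated: a constant ratio does not keep a trajectory inside a component of $\{u_1\neq0,\ u_2\neq0\}$, because $u_1$ and $u_2$ can vanish or blow up together (e.g.\ $\dot x=1$, $u_1=x$, $u_2=2x$, starting at $x=-1$), so for $\psi^N_{\bm q}$ you should instead use, as the paper's footnote does, that uniqueness of the flow prevents phase collisions in finite time.
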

\begin{proof} 
Since 
${\mathcal P}u_1 / u_1 = \Lambda_1({\bm x}, t) = {\mathcal P}u_2 / u_2 = \Lambda_2({\bm x}, t)$,
\begin{align}
    \mathcal{K} \left( \frac{u_2}{u_1} \right) = \left\{ \Lambda_1({\bm x}, t) - \Lambda_2({\bm x}, t)\right\} \frac{u_2}{u_1} = 0.
\end{align}
\end{proof}
\begin{theorem}
    \label{thm:2}
    Let $u_1(\bm{x})$ and $u_2(\bm{x})$
      be two PF eigenfunctions of $\mathcal{P}$ 
      and $\lambda_1$ and $\lambda_2$ be the corresponding eigenvalues.
    Then their ratio 
    ${u_2(\bm{x})} / {u_1(\bm{x})}$
     is a Koopman eigenfunction of $\mathcal{K}$ with eigenvalue $\lambda_1 - \lambda_2$.
\end{theorem}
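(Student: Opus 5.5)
The plan is to apply Lemma~\ref{lem:1} directly, specializing the pointwise functions $\Lambda_1$ and $\Lambda_2$ to constants. Since $u_1$ and $u_2$ are PF eigenfunctions, $\mathcal{P}u_1 = \lambda_1 u_1$ and $\mathcal{P}u_2 = \lambda_2 u_2$. Wherever $u_1$ and $u_2$ are nonzero, this gives
\begin{align}
\Lambda_1(\bm{x},t) = \frac{\mathcal{P}u_1(\bm{x})}{u_1(\bm{x})} = \lambda_1, \qquad \Lambda_2(\bm{x},t) = \frac{\mathcal{P}u_2(\bm{x})}{u_2(\bm{x})} = \lambda_2,
\end{align}
and both are constant in $\bm{x}$.

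Substituting these into the identity of Lemma~\ref{lem:1} yields $\mathcal{K}(u_2/u_1) = (\lambda_1-\lambda_2)\,u_2/u_1$. By the definition of a Koopman eigenfunction, this says $u_2/u_1$ is one, with eigenvalue $\lambda_1-\lambda_2$. Theorem~\ref{thm:1} is the special case $\lambda_1=\lambda_2$.

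The only genuine issue is the domain of validity. Lemma~\ref{lem:1} divides by $u_1$ and $u_2$, so the argument first holds on the open set where both are nonzero and $u_2/u_1$ is finite, as the lemma assumes. To remove the restriction $u_2\neq 0$, I would not route through $\Lambda_2$. Instead I would redo the lemma's computation in the form
\begin{align}
\mathcal{K}\!\left(\frac{u_2}{u_1}\right) = \frac{u_2\,\mathcal{P}u_1 - u_1\,\mathcal{P}u_2}{u_1^2},
\end{align}
which follows from $\bm{F}\cdot\nabla u = -\mathcal{P}u - (\nabla\cdot\bm{F})u$; the divergence terms cancel as in the lemma's proof. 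Inserting the eigenvalue relations then gives $(\lambda_1 u_1 u_2 - \lambda_2 u_1 u_2)/u_1^2 = (\lambda_1-\lambda_2)\,u_2/u_1$, which requires only $u_1\neq 0$.

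Finally, I would note that when $\mathcal{P}$ depends on $t$, the eigenvalues may depend on $t$ but not on $\bm{x}$. The identity therefore still holds pointwise in time, which is all the statement requires.
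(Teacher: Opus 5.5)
Your proposal is correct and is essentially the paper's proof, which likewise sets $\Lambda_1=\lambda_1$ and $\Lambda_2=\lambda_2$ and substitutes them into Lemma~\ref{lem:1}. Your extra step, using the undivided form $\bigl(u_2\,\mathcal{P}u_1 - u_1\,\mathcal{P}u_2\bigr)/u_1^2$ so that only $u_1\neq 0$ is required, is a valid minor refinement that the paper does not make.
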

\begin{proof} 
	Since $\mathcal{P} u_1({\bm x}) = \lambda_1 u_1({\bm x})$ and $\mathcal{P} u_2({\bm x}) = \lambda_2 u_2({\bm x})$, $\Lambda_1({\bm x}, t) = \lambda_1$ and $\Lambda_2({\bm x}, t) = \lambda_2$, hence
	\begin{align}
    \mathcal{K} \left( \frac{u_2}{u_1} \right) = ( \lambda_1 - \lambda_2 ) \frac{u_2}{u_1}.
	\end{align} 
\end{proof}

Also, we obtain the following corollary.
\begin{corollary}
    Any ratio of two PF eigenfunctions associated with the same eigenvalue is an invariant of the system.
\end{corollary}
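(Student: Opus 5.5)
The corollary follows by specializing Theorem~\ref{thm:2} to $\lambda_1=\lambda_2$, or equivalently from Theorem~\ref{thm:1}.

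Let $u_1(\bm{x})$ and $u_2(\bm{x})$ be two PF eigenfunctions of $\mathcal{P}$ with the same eigenvalue $\mu$, so that $\mathcal{P}u_1=\mu u_1$ and $\mathcal{P}u_2=\mu u_2$. As in Lemma~\ref{lem:1}, we assume the ratio $u_2/u_1$ is finite, i.e., we work on the region where $u_1\neq 0$.

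\textbf{Step 1.} On that region, divide each eigenvalue equation by its eigenfunction. This gives $\Lambda_1(\bm{x},t)=\mathcal{P}u_1/u_1=\mu$ and $\Lambda_2(\bm{x},t)=\mathcal{P}u_2/u_2=\mu$. Hence $\mathcal{P}u_1/u_1=\mathcal{P}u_2/u_2$, which is exactly the hypothesis of Theorem~\ref{thm:1}.

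\textbf{Step 2.} Invoke either Theorem~\ref{thm:1} directly, or Theorem~\ref{thm:2} with $\lambda_1=\lambda_2=\mu$. Either way,
\begin{align}
\mathcal{K}\left(\frac{u_2}{u_1}\right)=(\mu-\mu)\frac{u_2}{u_1}=0 .
\end{align}

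\textbf{Step 3.} Since $\mathcal{K}$ is the generator of the time evolution of observables, Eq.~\eqref{eq:Koopman_operator} gives
\begin{align}
\frac{d}{dt}\left(\frac{u_2}{u_1}\right)\bigl(\bm{x}(t)\bigr)=0
\end{align}
along every trajectory of Eq.~\eqref{eq:ode}. The ratio is therefore a constant of motion, i.e., an invariant.

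The only point needing care is that $\mathcal{P}$ may depend on $t$, and then the shared eigenvalue $\mu$ may also depend on $t$. This does not affect the argument. Lemma~\ref{lem:1} is a pointwise identity holding at each fixed $t$, and the difference $\mu(t)-\mu(t)$ is still identically zero. Beyond this remark and the nonvanishing of $u_1$, there is no real obstacle.
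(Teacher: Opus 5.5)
Your proof is correct and matches the paper's, which likewise sets $\lambda_1=\lambda_2$ in Theorem~\ref{thm:2} to conclude $\mathcal{K}(u_2/u_1)=0$. Your added remarks are accurate refinements that the paper leaves implicit: that $u_1$ must be nonvanishing, and that the shared eigenvalue may depend on $t$, which is harmless as long as $u_1,u_2$ themselves do not depend on $t$.
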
 
\begin{proof} 
If we assume $\lambda_1 = \lambda_2$, $\mathcal{K}( u_2 / u_1 ) = 0$ and hence $u_2 / u_1$ is invariant.
\end{proof}

\section{Coupled phase oscillators}

We now focus on a system of identical phase oscillators described by 
Eq.~\eqref{eq:state_dependent_equation}. The PF generator  $\mathcal{P}$ of Eq.~\eqref{eq:state_dependent_equation} acts on a scalar function $u(\bm{\theta}_N, t)$ as
\begin{align}
\mathcal{P}u:= - \sum_{j=1}^{N} \frac{\partial}{\partial \theta_j} \left[ ( f + g \cos \theta_j + h \sin \theta_j ) u \right].
\end{align}
We assume $N \geq 3$ and introduce the following function:
\footnote{We note that $\psi^N_{\bm{q}}\notin L^2([0,2\pi]^N)$ because it diverges when
$\theta_{q^{(j)}}=\theta_{q^{(j+1)}}$,
which can occur when $\theta_i = \theta_j$ for some $(i, j)$. 
In practice, we therefore consider a domain that excludes neighborhoods of these collision sets, for instance $X_\varepsilon
= \left\{(\theta_1,\ldots,\theta_N)\ \middle|\ 
|\theta_i-\theta_j|>\varepsilon \ \text{for all } i\neq j
\right\}$ with some $\varepsilon>0$. For identical oscillators evolving under a smooth vector field, the induced flow is unique and invertible for any finite time; hence phases cannot coincide in finite time unless they coincide initially.
We use $X_\varepsilon$ whenever needed to keep $\psi_{\bm{q}}^N$ well behaved. }
\begin{align}
    \label{eq:eigenfunction}
	\psi_{\bm{q}}^N(\bm{\theta}_N) := \frac{1}{\prod_{j=1}^N \sin\left( \frac{\theta_{q^{(j)}} - \theta_{q^{(j+1)}}}{2} \right)}.
\end{align}
Here, the vector index $\bm{q} = \left(q^{(1)}, q^{(2)}, \ldots, q^{(N)}\right)$ represents a permutation of the oscillator indices $\{1, 2, \ldots, N\}$, where $q^{(N+1)} := q^{(1)}$.
\par

\begin{theorem}
\label{thm:PF_on_psi}
By operating $\mathcal{P}$ on $\psi^N_{\bm{q}}$, the following equation holds:
\begin{align}
	\label{eq:fp11} 
    \mathcal{P} \psi_{\bm{q}}^N(\bm{\theta}_N) = \Lambda(\bm{\theta}_N, t) \psi_{\bm{q}}^N(\bm{\theta}_N),
\end{align}
where
\begin{align}
\label{eq:PF_eigenvalue_equation}
    &\Lambda(\bm{\theta}_N, t)
    := -\sum_{j=1}^N \left[
    \frac{\partial f(\bm{\theta}_N, t)}{\partial \theta_j}
    + \cos \theta_j \frac{\partial g(\bm{\theta}_N, t)}{\partial \theta_j}
    + \sin \theta_j \frac{\partial h(\bm{\theta}_N, t)}{\partial \theta_j}
    \right].
\end{align}
\end{theorem}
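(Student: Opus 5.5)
Write $v_j := f + g\cos\theta_j + h\sin\theta_j$, so that $\mathcal{P}\psi = -\sum_j \partial_{\theta_j}(v_j\psi)$. Expanding by the product rule gives
\begin{align}
\mathcal{P}\psi_{\bm q}^N = -\psi_{\bm q}^N \sum_{j=1}^N \frac{\partial v_j}{\partial\theta_j} - \psi_{\bm q}^N \sum_{j=1}^N v_j \frac{\partial \ln\psi_{\bm q}^N}{\partial\theta_j}.
\end{align}
The partial derivative $\partial v_j/\partial\theta_j$ has two kinds of terms. Those where the derivative falls on $f,g,h$ reproduce $\Lambda$ exactly. The explicit term is $-g\sin\theta_j + h\cos\theta_j$. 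So the theorem reduces to the identity
\begin{align}
\sum_{j=1}^N \left(-g\sin\theta_j + h\cos\theta_j\right) + \sum_{j=1}^N v_j \frac{\partial \ln\psi_{\bm q}^N}{\partial\theta_j} = 0,
\end{align}
which must hold for arbitrary values of $f,g,h$. It is convenient to relabel so that $\bm q=(1,2,\ldots,N)$; this is harmless because the statement is symmetric under relabeling. Since $f,g,h$ are arbitrary and enter linearly, I will verify the identity separately for the coefficient of $f$, of $g$, and of $h$.

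Next compute the logarithmic derivative. Because $\ln\psi = -\sum_k \ln\sin\big((\theta_k-\theta_{k+1})/2\big)$, each $\theta_j$ appears only in the factors with $k=j$ and $k=j-1$. Hence
\begin{align}
\frac{\partial\ln\psi}{\partial\theta_j} = -\tfrac12\cot\tfrac{\theta_j-\theta_{j+1}}{2} + \tfrac12\cot\tfrac{\theta_{j-1}-\theta_j}{2}.
\end{align}
Summing over $j$, and reindexing the second term cyclically, gives telescoping: $\sum_j \sum_k (\ldots)$ reorganizes into a sum over edges $k$ of $\tfrac12\cot(\delta_k/2)\,(v_{k+1}-v_k)$, where $\delta_k := \theta_k-\theta_{k+1}$.

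Now check each coefficient.
\begin{itemize}
\item \textbf{Coefficient of $f$.} Here $v_{k+1}-v_k$ has no $f$ contribution, so this coefficient vanishes immediately. This is just the rotational invariance of $\psi$.
\item \textbf{Coefficient of $g$.} We have $\cos\theta_{k+1}-\cos\theta_k = 2\sin\frac{\theta_k+\theta_{k+1}}{2}\sin\frac{\delta_k}{2}$. Multiplying by $\tfrac12\cot(\delta_k/2)$ gives $\sin\frac{\theta_k+\theta_{k+1}}{2}\cos\frac{\delta_k}{2} = \tfrac12(\sin\theta_k+\sin\theta_{k+1})$. Summing over the cycle yields $\sum_j \sin\theta_j$, which cancels the explicit term $-g\sum_j\sin\theta_j$.
\item \textbf{Coefficient of $h$.} Analogously, $\sin\theta_{k+1}-\sin\theta_k = -2\cos\frac{\theta_k+\theta_{k+1}}{2}\sin\frac{\delta_k}{2}$. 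This yields $-\tfrac12(\cos\theta_k+\cos\theta_{k+1})$, which sums to $-\sum_j\cos\theta_j$ and cancels $h\sum_j\cos\theta_j$.
\end{itemize}

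The main obstacle I expect is getting the signs and cyclic index bookkeeping right. In particular, the derivative of the $k=j-1$ factor with respect to $\theta_j$ carries a minus sign, and the wraparound $q^{(N+1)}=q^{(1)}$ must be handled correctly; the cyclic closure is exactly what makes the telescoped sums produce full sums over all $j$. I would also note that the computation holds pointwise on $X_\varepsilon$, where $\psi$ is smooth.
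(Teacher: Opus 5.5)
Your proposal is correct and follows essentially the same route as the paper's Appendix~A: the product rule, the cotangent form of $\partial_{\theta_j}\ln\psi^N_{\bm q}$, a cyclic summation by parts, and the sum-to-product identities. These identities turn the $g$- and $h$-parts of $\tfrac12\cot(\delta_k/2)(v_{k+1}-v_k)$ into $\tfrac12(\sin\theta_k+\sin\theta_{k+1})$ and $-\tfrac12(\cos\theta_k+\cos\theta_{k+1})$, which cancel the explicit $-g\sin\theta_j+h\cos\theta_j$ terms. The only cosmetic difference is that you do the summation by parts once on the full velocity $v_k$ and then split by coefficient, whereas the paper treats the $f$, $g\cos\theta_j$, and $h\sin\theta_j$ terms separately from the start.
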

\begin{proof}
    See Appendix~A.
\end{proof}

\begin{theorem}
The ratio 
\begin{align}
    \Psi^N_{\bm{q}_1, \bm{q}_2}(\bm{\theta}_N) := \frac{\psi^N_{\bm{q}_2}(\bm{\theta}_N)}{\psi^N_{\bm{q}_1}(\bm{\theta}_N)}
\end{align}
is an eigenfunction of the associated Koopman generator $\mathcal{K}$ corresponding to eigenvalue $0$, 
and thus it serves as an invariant for Eq.~\eqref{eq:base_equation}.
\end{theorem}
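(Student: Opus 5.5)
The plan is to apply Theorem~\ref{thm:1} directly, using Theorem~\ref{thm:PF_on_psi} as the only real input. Fix two permutations $\bm{q}_1$ and $\bm{q}_2$ of $\{1,\ldots,N\}$ and set $u_1 = \psi^N_{\bm{q}_1}$ and $u_2 = \psi^N_{\bm{q}_2}$. By Theorem~\ref{thm:PF_on_psi}, $\mathcal{P}u_1 = \Lambda(\bm{\theta}_N,t)\,u_1$ and $\mathcal{P}u_2 = \Lambda(\bm{\theta}_N,t)\,u_2$.

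The key observation is that the multiplier $\Lambda$ in Eq.~\eqref{eq:PF_eigenvalue_equation} does not depend on the permutation $\bm{q}$. It is built solely from $f$, $g$, $h$ and their partial derivatives, which are symmetric data of the vector field. Hence $\mathcal{P}u_1/u_1 = \mathcal{P}u_2/u_2 = \Lambda$ pointwise, and this holds even though $\Lambda$ is a function of $\bm{\theta}_N$ and $t$ rather than a constant eigenvalue. This is exactly the hypothesis of Theorem~\ref{thm:1}. Equivalently, Lemma~\ref{lem:1} gives
\begin{align}
\mathcal{K}\Psi^N_{\bm{q}_1,\bm{q}_2} = (\Lambda-\Lambda)\,\Psi^N_{\bm{q}_1,\bm{q}_2} = 0 .
\end{align}
So $\Psi^N_{\bm{q}_1,\bm{q}_2}$ is a Koopman eigenfunction with eigenvalue $0$. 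Since $\frac{d}{dt}\Psi = \mathcal{K}\Psi$ along trajectories by Eq.~\eqref{eq:Koopman_operator}, it is conserved by the flow of Eq.~\eqref{eq:state_dependent_equation}. The case of Eq.~\eqref{eq:time_dependent_equation} follows by taking $f,g,h$ independent of $\bm{\theta}_N$, in which case $\Lambda\equiv 0$.

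The only point needing care is well-definedness, because Lemma~\ref{lem:1} requires the ratio $u_2/u_1$ to be finite and the functions to be differentiable. I would restrict to the domain $X_\varepsilon$ from the footnote. There all phases are distinct, so every factor $\sin((\theta_i-\theta_j)/2)$ is nonzero and both $\psi$'s are smooth and nonvanishing. Then $\Psi^N_{\bm{q}_1,\bm{q}_2}$ is a ratio of two finite products of these sine factors and is finite and smooth.

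Invariance of the domain follows from uniqueness of solutions: phases that are distinct initially cannot collide in finite time. So the computation of Lemma~\ref{lem:1}, which is a pointwise quotient-rule identity, is valid along every trajectory starting in the collision-free set. The real substance, the permutation-independence of $\Lambda$, is already supplied by Theorem~\ref{thm:PF_on_psi}. No further obstacle is expected beyond this domain bookkeeping.
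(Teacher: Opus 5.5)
Your proposal is correct and matches the paper's proof. Both arguments observe that the multiplier $\Lambda$ from Theorem~\ref{thm:PF_on_psi} does not depend on the permutation $\bm{q}$, so $\mathcal{P}\psi^N_{\bm{q}_1}/\psi^N_{\bm{q}_1}=\mathcal{P}\psi^N_{\bm{q}_2}/\psi^N_{\bm{q}_2}$, and Theorem~\ref{thm:1} then gives $\mathcal{K}\Psi^N_{\bm{q}_1,\bm{q}_2}=0$.

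Your domain bookkeeping is more explicit than the paper's, but $X_\varepsilon$ itself is not forward invariant: phases can come within $\varepsilon$ of each other, e.g.\ in the synchronizing Kuramoto case. The set your argument actually needs, and the one that is invariant, is the full collision-free set $\{\theta_i\neq\theta_j\ \text{for all}\ i\neq j\}$.
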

\begin{proof}
$\mathcal{P} \psi_{\bm{q}_1}^N / \psi_{\bm{q}_1}^N = \mathcal{P} \psi_{\bm{q}_2}^N / \psi_{\bm{q}_2}^N$ holds for any pair of permutations $\bm{q}_1$ and $\bm{q}_2$ because $\Lambda(\bm{\theta}_N, t)$ in Eq.~\eqref{eq:PF_eigenvalue_equation} does not depend on the permutation 
$\bm{q}$ of the indices $\{1, 2, \ldots, N\}$. From Theorem~\ref{thm:1}, the ratio $\psi^N_{\bm{q}_2} / \psi^N_{\bm{q}_1}$ is an eigenfunction of $\mathcal{K}$ with eigenvalue $0$.
\end{proof}

\par

We have introduced $\psi^N_{\bm{q}}$ for $N\geq 3$, but nontrivial invariants of the form $\Psi^N_{\bm{q}_1,\bm{q}_2}$ arise only for $N\geq 4$. When $N=3$, $\psi^3_{\bm{q}}$ is the same for any permutation $\bm{q}$, thus we have only a single trivial invariant  $\Psi^{3}_{\bm{q}_1,\bm{q}_2} = \pm1$ (the sign depends on whether the permutations $\bm{q}_1$ and $\bm{q}_2$ are even or odd).
For general $N$, the invariant $\Psi^N_{\bm{q}_1, \bm{q}_2}$ can be related to the known invariants, called cross ratios~\cite{Marvel2009identical}, and we can show that the number of independent invariants of the form\footnote{We note that other invariants that do not take the form of the cross ratio can also exist; see the examples of the Theta model in Appendix~C.} $\Psi^N_{\bm{q}_1, \bm{q}_2}$ is $N-3$ except for a constant function, as shown in Appendix~B.

Let us remark on the physical meaning of $\psi^N_{\bm{q}}(\bm{\theta}_N)$ here.
Suppose that $\Lambda(\bm{\theta}_N, t) = \lambda = \text{const.}$. Then, $\psi^N_{\bm{q}}$ itself is a PF eigenfunction of $\mathcal{P}$ with eigenvalue $\lambda$.
Since $\mathcal{P}$ describes the evolution of the Liouville equation~\eqref{eq:continuity_eq},
\begin{align}
    \label{eq:particular solution}
    \rho(\bm{\theta}_N, t) = A e^{\lambda t} \psi^N_{\bm{q}}(\bm{\theta}_N)
\end{align}
is a particular solution of Eq.~\eqref{eq:continuity_eq}, where $A$ is an arbitrary constant.
In particular, for the case of independent oscillators where each oscillator obeys Eq.~\eqref{eq:time_dependent_equation} with $f$, $g$, and $h$ depending only on time $t$, we obtain $\Lambda(\bm{\theta}_N, t) = 0$ in Eq.~\eqref{eq:PF_eigenvalue_equation}. 
Therefore, $\psi^N_{\bm{q}}$ is the PF eigenfunction with eigenvalue $\lambda=0$ and thus it is a stationary solution of the Liouville equation~\eqref{eq:continuity_eq}, which can be interpreted as a stationary (unnormalized) density.

Let us also remark that the modulus $\left|\psi_{\bm{q}}^N\right|$ of $\psi^N_{\bm{q}}$ has the same property as Eq.~\eqref{eq:fp11}, i.e.,
\begin{align}
	\label{eq:fp_mod} 
    \mathcal{P} \left|\psi_{\bm{q}}^N (\bm{\theta}_N) \right| = \Lambda(\bm{\theta}_N, t) \left|\psi_{\bm{q}}^N (\bm{\theta}_N) \right|,
\end{align}
where $\Lambda$ is given in Eq.~\eqref{eq:PF_eigenvalue_equation}. 
Therefore, we can repeat the same argument as above also for $\left|\psi_{\bm{q}}^N\right|$. 
In particular, if $\left|\psi_{\bm{q}}^N\right|$ is a PF eigenfunction of $\mathcal{P}$,
we obtain a particular solution of Eq.~\eqref{eq:continuity_eq} as
\begin{align}
        \rho(\bm{\theta}_N, t) = A e^{\lambda t} \left|\psi^N_{\bm{q}} (\bm{\theta}_N) \right|.
\end{align}
In the numerical simulations presented below, because $\left|\psi^N_{\bm{q}}\right|$ is singular on $[0,2\pi]^N$, we clip it and use the resulting function as a weighting function to sample the initial conditions. We then simulate the evolution of the distribution of system states and compare it with the Liouvillian dynamics of the state density.

In what follows, we present a few explicit examples that have been well studied in the literature. 
We consider the Ermentrout--Kopell Theta model ($N$ oscillators without interaction)~\cite{ermentrout1986parabolic,ermentrout2008ekmodel}, Kuramoto--Sakaguchi model ($N$ oscillators with pairwise interaction)~\cite{Kuramoto1984chemical,sakaguchi1986soluble}, and higher-order (HO) Kuramoto model ($N$ oscillators with 3-body interaction)~\cite{tanaka2011multistable,skardal2020higher,leon2024anomalous,leon2025theory,fujii2025emergence}. In each case, we focus on $\mathcal{P}\psi^N_{\bm{q}}$ to see the difference between the models.
As discussed above, $\Psi^N_{\bm{q}_1, \bm{q}_2}$ is an invariant for each system. 
For each system, we show the evolution of the system states for $N=3$, where initial conditions are sampled according to the weight given by $\left|\psi^3_{\bm{q}}\right|$.
We also show numerical examples of the time evolution of $\bm{\theta}_N$, $\psi^N_{\bm{q}}$, and $\Psi^N_{\bm{q}_1, \bm{q}_2}$ with $N=10$.

\subsection{Example 1. Ermentrout--Kopell Theta model}

The Ermentrout--Kopell Theta model~\cite{ermentrout1986parabolic,ermentrout2008ekmodel} is a phase model of neuronal dynamics and also serves as the normal form of a saddle-node bifurcation on an invariant circle. %
We consider $N$ uncoupled oscillators driven by a common input.
The phase of each oscillator obeys
\begin{align}
    \label{eq:theta_model}
    \frac{d\theta_j}{dt} = 1 - \cos \theta_j + (1 + \cos \theta_j)\, I(t),
\end{align}
for $j=1, ..., N$, where $I(t)$ is the common external input to the oscillator. 
Thus, the Theta model corresponds to the case with $f(t) = 1 + I(t)$, $g(t) = -1 + I(t)$, and $h(t) = 0$ in Eq.~\eqref{eq:base_equation}. 
Since $f$, $g$, and $h$ are independent of the phases ${\bm \theta}_N = (\theta_1, \ldots, \theta_N)$, all derivatives in Eq.~\eqref{eq:PF_eigenvalue_equation} vanish and hence $\Lambda = 0$ holds.
Therefore, $\psi^N_{\bm{q}}({\bm \theta}_N)$ is a PF eigenfunction of this system with eigenvalue $0$. 
By the method explained above, we can introduce $N-3$ invariants $\Psi^N_{\bm{q}_1,\bm{q}_2}({\bm \theta}_N)$ from $\psi^N_{\bm{q}_1}({\bm \theta}_N)$ and $\psi^N_{\bm{q}_2}({\bm \theta}_N)$, where $\bm{q}_1$ and $\bm{q}_2$ are permutations of the oscillator indices $\{ 1, 2, ..., N\}$.

First, we consider $N=3$ oscillators with $I(t) = \sin t$ as an example. Only one independent PF eigenfunction of the form $\psi^3_{\bm q}$ with ${\bm q} = (1, 2, 3)$ exists in this case.
Figure~\ref{fig:theta_model}(a) shows the evolution of the distribution of the phases ${\bm \theta}_3 = (\theta_1, \theta_2, \theta_3)$ on the $(\theta_1-\theta_2) - (\theta_2-\theta_3)$ plane obtained by simulating the Theta model, Eq.~\eqref{eq:theta_model}, from $10^5$ initial conditions. 
Here, the initial conditions were randomly sampled with weights proportional to the PF eigenfunction $\left|\psi^3_{\bm{q}}({\bm \theta}_3)\right|$, which was clipped at $10^2$ because $\left|\psi^3_{\bm{q}}({\bm \theta}_3)\right|$ diverges when $\theta_1 = \theta_2$, $\theta_2 = \theta_3$, or $\theta_3 = \theta_1$.
The stationary solution $\left|\psi^3_{\bm{q}}({\bm \theta}_3)\right|$ of the Liouville equation~\eqref{eq:continuity_eq} given by Eq.~\eqref{eq:eigenfunction} is shown on the same plane (also clipped at $10^2$ and divergent points are also plotted in black). 
In this example, the phase distribution on the $(\theta_1-\theta_2)$--$(\theta_2-\theta_3)$ plane remains approximately stationary and is well captured by the stationary solution of the Liouville equation.

Next, we consider the case with $N=10$ and $I(t) = \sin t$. Although the oscillators are uncoupled, the common time-dependent input entrains them, leading to phase locking with the input.
In this case, we can identify eight independent functions of the form
$\psi^{10}_{\bm q}(\bm{\theta}_{10}(t))$, from which $N-3=7$ independent
invariants of the form $\Psi^{10}_{\bm q_1,\bm q_2}(\bm{\theta}_{10}(t))$
can be constructed.

Figure~\ref{fig:theta_model}(b) shows the time evolution of (b-1) ${\bm \theta}_{10}(t) = (\theta_1(t), \theta_2(t), \ldots, \theta_{10}(t))$,
(b-2) $\left|\psi^{10}_{\bm{q}}(\bm{\theta}_{10}(t))\right|$
, and (b-3) $\left|\Psi^{10}_{\bm{q}_1,\bm{q}_2}(\bm{\theta}_{10}(t))\right|$.
We can confirm that the ratio $\left|\Psi^{10}_{\bm{q}_1,\bm{q}_2}({\bm \theta}_{10}(t))\right|$ remains invariant under the non-stationary evolution of the phases ${\bm \theta}_{10}(t) = (\theta_1(t), \theta_2(t), \ldots, \theta_{10}(t))$.
\begin{figure}[h]
\includegraphics[width=\linewidth]{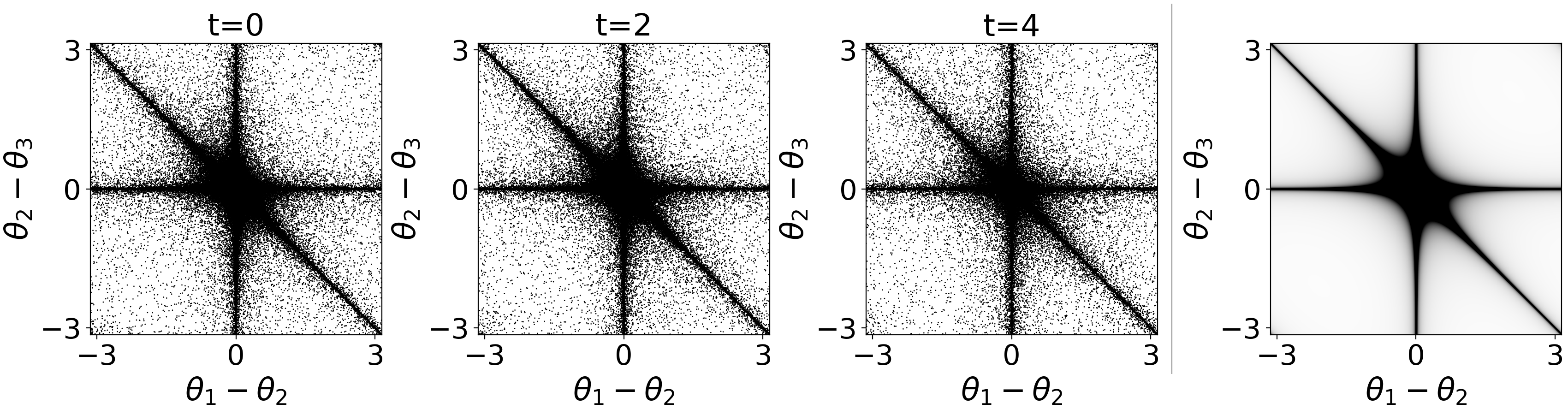}\\
    \hspace{50mm}(a-1)\hspace{62mm}(a-2)\\
\includegraphics[width=\linewidth]{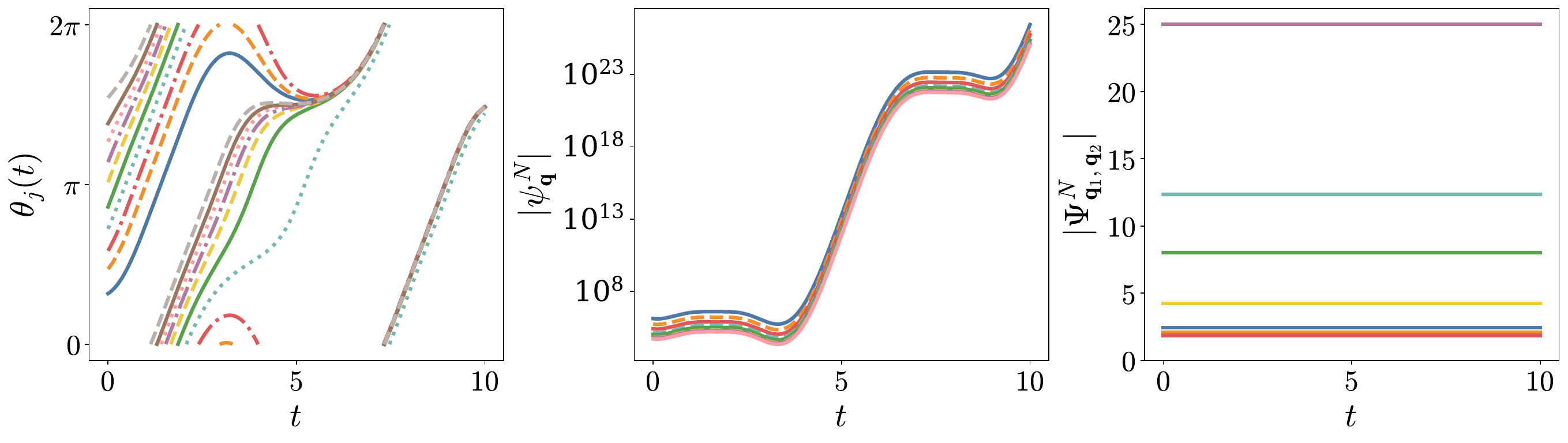}\\
   \hspace{23mm}(b-1)\hspace{40mm}(b-2)\hspace{38mm}(b-3)
    \caption{Theta model with $I(t) = \sin t$. 
    (a) 
    Results for $N=3$ oscillators.
    (a-1) 
    Evolution of the distribution of the phases ${\bm \theta}_3 = (\theta_1, \theta_2, \theta_3)$ on the $(\theta_1-\theta_2) - (\theta_2-\theta_3)$ plane, obtained by simulating the Theta model from $10^5$ initial conditions sampled randomly with the weights proportional to $\left|\psi^3_{\bm{q}}({\bm \theta}_3)\right|$ (clipped at $10^2$). Each black point corresponds to a point ${\bm \theta}_3$ in the state space.
    (a-2) 
	Density plot of $\left|\psi^3_{\bm{q}}({\bm \theta}_3)\right|$ on the same plane, which is the stationary solution of the Liouville equation, clipped at $10^2$.
    Vertical, horizontal, and diagonal lines correspond to $\theta_1=\theta_2$, $\theta_2=\theta_3$, and $\theta_3=\theta_1$, which are singular points of $\left|\psi^3_{\bm{q}}({\bm \theta}_3)\right|$ and also plotted in black.
    (b)  
	Results for $N=10$ oscillators.
	Typical time evolution of the phases ${\bm \theta}_{10}(t) = (\theta_1(t), \theta_2(t), \ldots, \theta_{10}(t))$ (b-1), $\left|\psi^{10}_{\bm{q}}(\bm{\theta}_{10}(t))\right|$ (b-2), and $\left|\Psi^{10}_{\bm{q}_1,\bm{q}_2}(\bm{\theta}_{10}(t))\right|$ (b-3) with time $t$.
    }
    \label{fig:theta_model}
\end{figure}

\subsection{Example 2. Kuramoto--Sakaguchi Model}

The Kuramoto--Sakaguchi model~\cite{Kuramoto1984chemical,sakaguchi1986soluble} describes a system of globally coupled oscillators. When all oscillators share the same frequency, the model is given by
\begin{align}
    \label{eq:kuramoto_model}
    \frac{d\theta_j}{dt} &= \omega + \frac{K}{N}\sum_{k=1}^{N}\sin(\theta_k - \theta_j + \delta), \quad j = 1, 2, \ldots, N,
\end{align}
where the parameters $\omega$, $K$, and $\delta$ are the natural frequency, coupling intensity, and coupling phase lag.

By transforming the right-hand side of Eq.~\eqref{eq:kuramoto_model}, we obtain
\begin{align}
    &\frac{d\theta_j}{dt}
    = \omega + \frac{K}{N} \sum_{k=1}^N \left[
        \sin(\theta_k + \delta)\cos \theta_j
        - \cos(\theta_k + \delta)\sin \theta_j
    \right].
\end{align}
Thus, it has the form of Eq.~\eqref{eq:state_dependent_equation} with
\begin{align}
    f(\bm{\theta}_N, t) =& \omega, \cr
    g(\bm{\theta}_N, t) =& \frac{K}{N} \sum_{k=1}^N \sin(\theta_k + \delta), \cr
    h(\bm{\theta}_N, t) =& -\frac{K}{N} \sum_{k=1}^N \cos(\theta_k + \delta).
\end{align}
From these relations and Theorem~\ref{thm:PF_on_psi}, we obtain
\begin{align}
    \Lambda = - K \cos \delta,
\end{align}
which is a constant that does not depend on $\bm{\theta}_N$ or $t$. Thus, $\psi_{\bm{q}}^N$ is a PF eigenfunction with eigenvalue $-K \cos \delta$. 
\par

In particular, the case $\delta = \frac{\pi}{2}$ (or equivalently $-\frac{\pi}{2}$) is known to be integrable~\cite{Watanabe1993integrability}.
In this case, the phase coupling between the oscillators is neutral and mutual synchronization does not occur for general $N$.
Correspondingly, the PF eigenvalue vanishes, i.e., $\Lambda = - K \cos \delta = 0$, and $\left|\psi^N_{\bm{q}}({\bm \theta}_N)\right|$ corresponds to a stationary solution of the Liouville equation.
In this case, any constant function $c$ is also a PF eigenfunction with eigenvalue $\Lambda = 0$, because
\begin{align}
    \mathcal{P}c &= \sum_{j=1}^N\frac{\partial}{\partial\theta_j} \left[\omega+\frac{K}{N}\sum_{k=1}^N\cos(\theta_k-\theta_j)\right] c 
    = c \frac{K}{N}\sum_{j=1}^N\sum_{k=1}^N[-\sin(\theta_k-\theta_j)]=0.
\end{align}
The ratio of $\psi^N_{\bm{q}}({\bm \theta}_N)$ to a constant function $c$ is a Koopman eigenfunction with eigenvalue $0$, according to Theorem~\ref{thm:2}.
Thus, by assuming $c=1$, $\psi^N_{\bm{q}}({\bm \theta}_N)$ itself is found to be a Koopman eigenfunction.
This invariant has previously been found heuristically in~\cite{Watanabe1993integrability}. Our framework provides an explanation for why such invariants emerge specifically when $\delta = \pm \frac{\pi}{2}$; the constant function becomes a PF eigenfunction only in this case.

Figure~\ref{fig:Kuramoto--Sakaguchi_model} shows the numerical results for $N=3$ oscillators with $K = 1$, $\omega = 0$, and $\delta = \frac{\pi}{2}$.
In Fig.~\ref{fig:Kuramoto--Sakaguchi_model}(a-1), evolution of the distribution of the phases ${\bm \theta}_3 = (\theta_1, \theta_2, \theta_3)$ is plotted on the $(\theta_1-\theta_2) - (\theta_2-\theta_3)$ plane as before, which was obtained by simulating the Kuramoto--Sakaguchi model from $10^5$ initial conditions.
The initial conditions were randomly sampled with weights proportional to $\left|\psi^3_{\bm{q}}({\bm \theta}_3)\right|$ (clipped at $10^2$).
Since $\Lambda = 0$ in this case, the phase distribution remains approximately stationary, and the oscillators do not tend to synchronize.
In Fig.~\ref{fig:Kuramoto--Sakaguchi_model}(a-2), the PF eigenfunction $|\psi^3_{\bm{q}}({\bm \theta}_3)|$, which is a stationary solution of the Liouville equation, is plotted (also clipped at $10^2$).
The vertical, horizontal, and diagonal lines correspond to $\theta_1 = \theta_2$, $\theta_2 = \theta_3$, and $\theta_3 = \theta_1$, respectively, which are singular points of $\left|\psi^3_{\bm{q}}({\bm \theta}_3)\right|$ and plotted in black.
The phase distributions in (a-1) agree well with the stationary state density in (a-2).

Figure~\ref{fig:Kuramoto--Sakaguchi_model}(b) shows the results for $N=10$ oscillators, where
time evolution of (b-1) the oscillator phases ${\bm \theta}_{10}(t) = (\theta_1(t), ..., \theta_{10}(t))$,
(b-2) $\left|\psi^{10}_{\bm{q}}({\bm \theta}_{10}(t))\right|$, and (b-3) time evolution of the ratio $\left|\Psi^{10}_{\bm{q}_1,\bm{q}_2}({\bm \theta}_{10}(t))\right|$
are plotted with respect to $t$.
We can confirm that the functions $\left|\psi^{10}_{\bm{q}}({\bm \theta}_{10}(t))\right|$,
which are also Koopman eigenfunctions, and $\left|\Psi^{10}_{\bm{q}_1,\bm{q}_2}({\bm \theta}_{10}(t))\right|$ remain constant under the non-stationary evolution of ${\bm \theta}_{10}(t)$.
Next, we consider the case $\delta = 0$, where Eq.~\eqref{eq:kuramoto_model} is the standard Kuramoto model with a homogeneous frequency. 
In this case, all oscillator asymptotically synchronize completely, namely, $\lim_{t \to \infty} ( \theta_i(t) - \theta_j(t) ) = 0$ for all $i$ and $j$ from almost all initial conditions.
We note that unstable phase-locked states also exist, which do not converge to complete synchronization.

First, we consider $N=3$ oscillators with $K = 1$, $\omega = 0$, and $\delta = 0$.
Figure~\ref{fig:Kuramoto--Sakaguchi_model}(c-1) shows
the evolution of the distribution of the phases ${\bm \theta}_3 = (\theta_1, \theta_2, \theta_3)$ on the $(\theta_1-\theta_2) - (\theta_2-\theta_3)$ plane as before, obtained by simulating the Kuramoto--Sakaguchi model, Eq.~\eqref{eq:kuramoto_model}, from $10^5$ initial conditions. Here, the initial conditions were randomly sampled with weights proportional to the PF eigenfunction $\left|\psi^3_{\bm{q}}({\bm \theta}_3)\right|$ (clipped at $10^2$).
Since the oscillators tend to synchronize completely from almost all conditions in this case, the phase distribution tends to concentrate at the origin where $\theta_1 = \theta_2 = \theta_3$ as $t$ increases. 
The vertical, horizontal, and diagonal lines correspond to the states where 
two oscillators are synchronized ($\theta_1 = \theta_2$, $\theta_2 = \theta_3$, or $\theta_3 = \theta_1$). Note that there are also unstable fixed points that do not converge to complete synchronization on these lines.

Figure~\ref{fig:Kuramoto--Sakaguchi_model}(c-2) plots the evolution of the state density by the Liouville equation~\eqref{eq:continuity_eq}, where the initial density is taken as the PF eigenfunction $\left|\psi^3_{\bm{q}}({\bm \theta}_3)\right|$. The solution is therefore given by $Ae^{-Kt}\left|\psi^3_{\bm{q}}({\bm \theta}_3)\right|$, which decays exponentially with time $t$. 
The phase distribution in (c-1) and the state density in (c-2) agree well with each other.
As the oscillators synchronize mutually, the state density decays to zero at every point in the state space, except on the vertical, horizontal, or diagonal line where
$\theta_i = \theta_j$ for any pair of $i, j = 1, 2, 3$.
Correspondingly, the PF eigenfunction $\psi^N_{\bm{q}}$ diverges when $\theta_i = \theta_j$ for any $i, j$.
The state density tends to accumulate on these singular sets, which corresponds to the formation of synchronized clusters (i.e., at least some oscillators sharing the same phase).
Then, the state density eventually accumulates at the origin, approaching the completely synchronized state. Note that there also exist singular points corresponding to the unstable solution that do not converge to full synchrony.
Figure~\ref{fig:Kuramoto--Sakaguchi_model}(d) shows the time evolution of (d-1) ${\bm \theta}_{10}(t) = (\theta_1(t), \theta_2(t), \ldots, \theta_{10}(t))$,
(d-2) $|\psi^{10}_{\bm{q}}(\bm{\theta}_{10}(t))|$
, and (d-3) $|\Psi^{10}_{\bm{q}_1,\bm{q}_2}(\bm{\theta}_{10}(t))|$.
We can confirm that the ratio $|\Psi^{10}_{\bm{q}_1,\bm{q}_2}({\bm \theta}_{10}(t))|$ remains invariant under the non-stationary evolution of the phases ${\bm \theta}_{10}(t) = (\theta_1(t), \theta_2(t), \ldots, \theta_{10}(t))$.

\begin{figure}
\includegraphics[width=\linewidth]{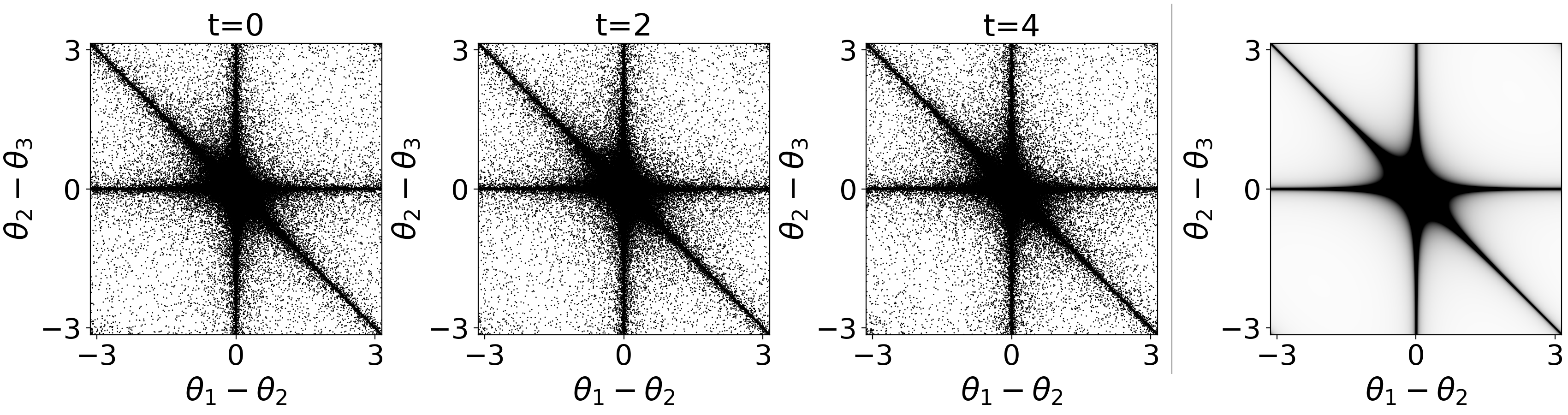}\\
\hspace{54mm}(a-1)\hspace{84mm}(a-2)\\
  \includegraphics[width=\linewidth]{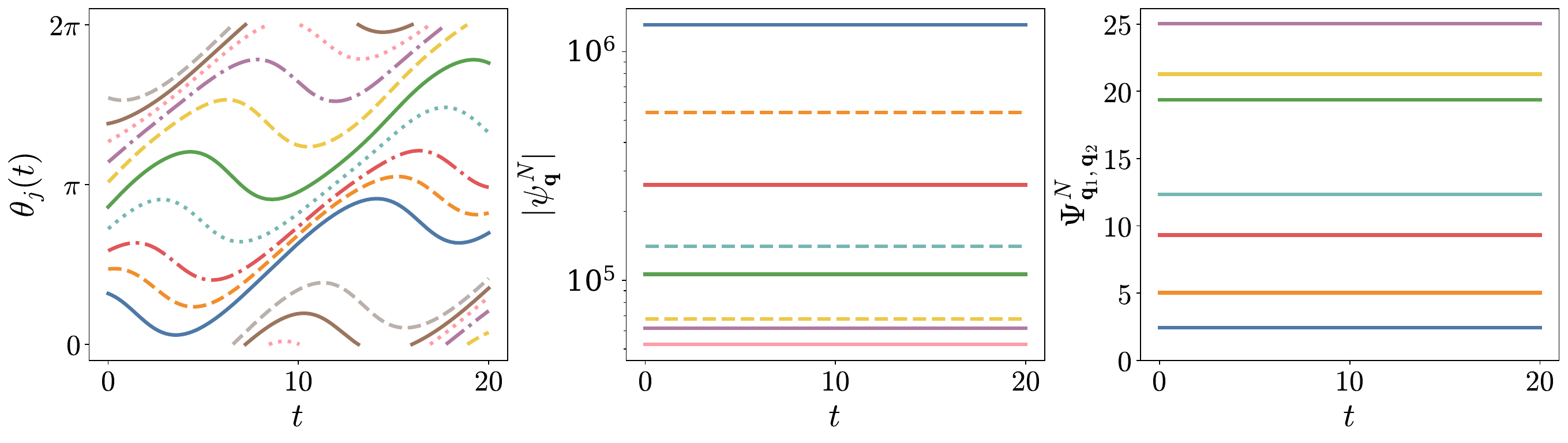}\\
\hspace{12mm}(b-1)\hspace{55mm}(b-2)\hspace{52mm}(b-3)\\
\includegraphics[width=0.49\linewidth]{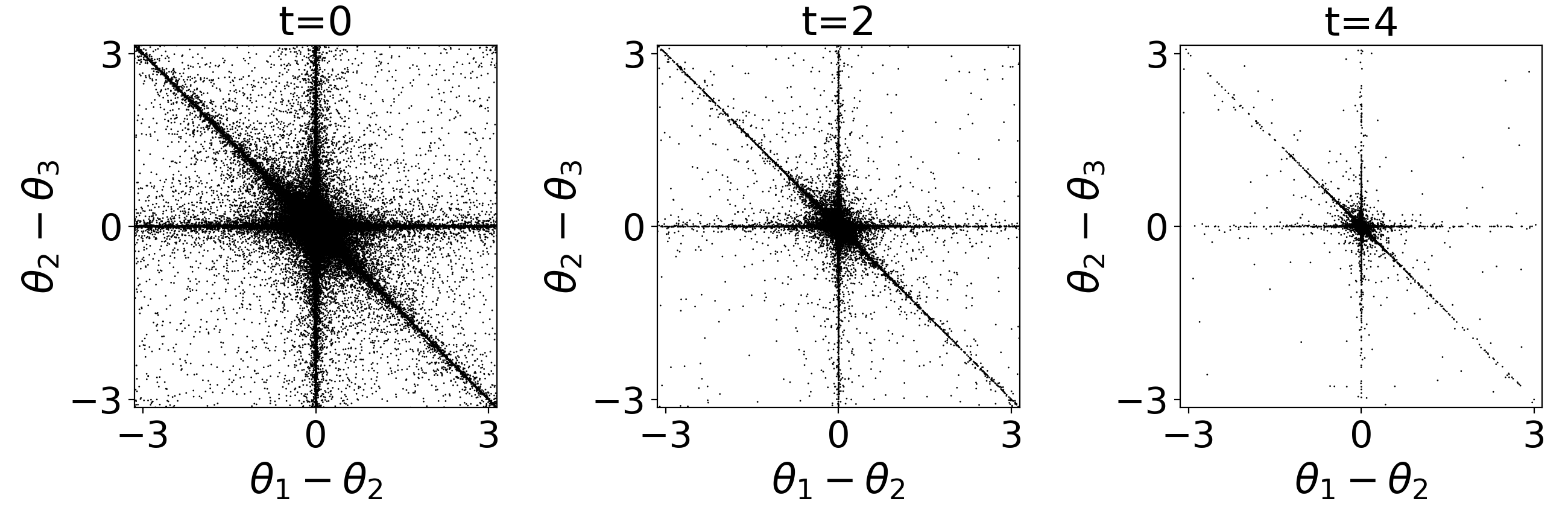}
\vrule width 1pt
\includegraphics[width=0.49\linewidth]{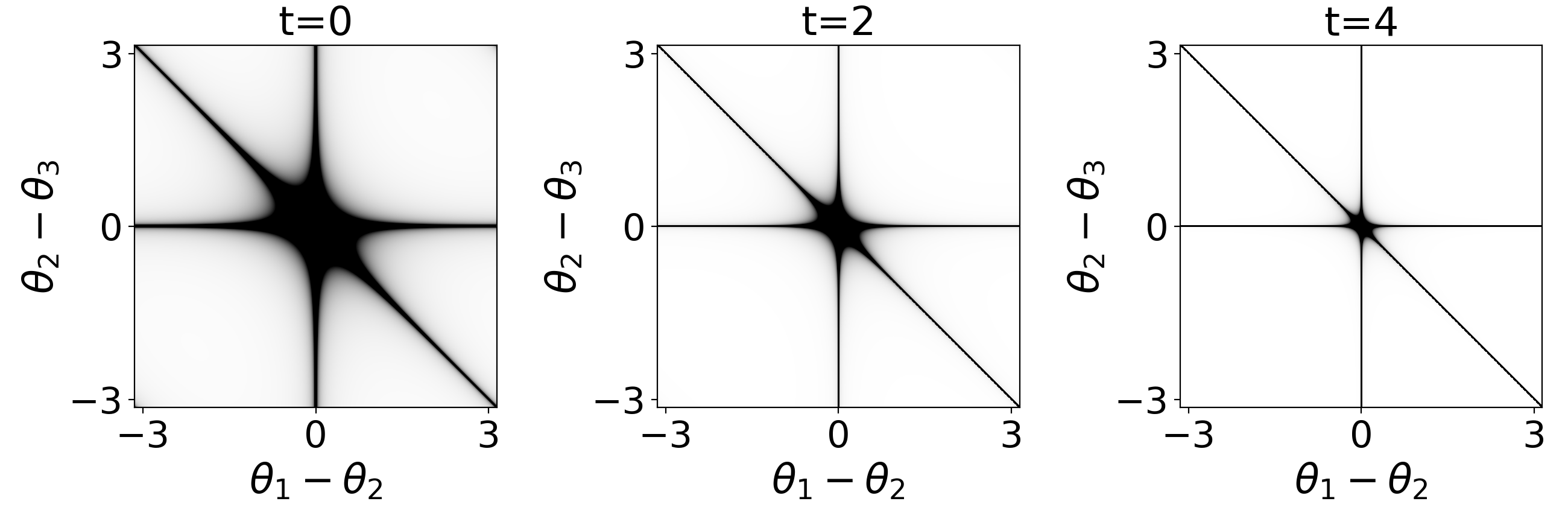}\\
\hspace{6mm}(c-1)\hspace{83mm}(c-2)\\
\includegraphics[width=\linewidth]{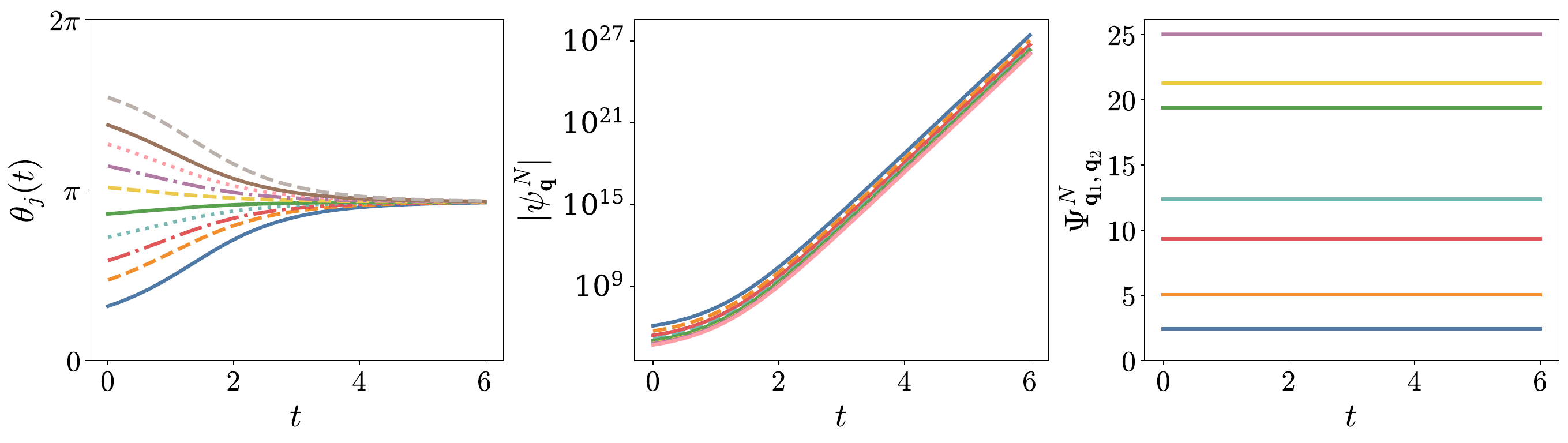}\\
\hspace{10mm}(d-1)\hspace{55mm}(d-2)\hspace{53mm}(d-3)
    \caption{Kuramoto--Sakaguchi model with $K = 1$ and $\omega = 0$. 
    (a) Results for $N=3$ oscillators with $\delta = \pi/2$.
    (a-1) Evolution of the distribution of the phases ${\bm \theta}_3 = (\theta_1, \theta_2, \theta_3)$ on the $(\theta_1-\theta_2) - (\theta_2-\theta_3)$ plane, obtained by simulating the Kuramoto--Sakaguchi model from $10^5$ initial conditions randomly sampled as before.
	(a-2) Stationary state density $\left|\psi^3_{\bm q}({\bm \theta}_3)\right|$ (clipped at $10^2$) of the Liouville equation.
    (b) Results for $N=10$ oscillators with $\delta = \pi/2$.
    Evolution of the phases ${\bm \theta}_{10}(t) = (\theta_1(t), \theta_2(t), \ldots, \theta_{10}(t))$ (b-1),
    $\left|\psi^{10}_{\bm{q}}({\bm \theta}_{10}(t))\right|$  
    (b-2),
    and  $\left|\Psi^{10}_{\bm{q}_1,\bm{q}_2}({\bm \theta}_{10}(t))\right|$
    (b-3) with time $t$.
        (c) Results for $N=3$ oscillators with $\delta = 0$.
    (c-1) 
    Evolution of the distribution of the phases ${\bm \theta}_3 = (\theta_1, \theta_2, \theta_3)$ obtained by simulating the Kuramoto--Sakaguchi model from $10^5$ initial conditions randomly sampled with weights proportional to $\left|\psi^3_{\bm{q}}({\bm \theta}_3)\right|$ (clipped at $10^2$).
    Each black point corresponds to a point ${\bm \theta}_3$ in the state space.
    (c-2) Evolution of the solution $\left|\psi^3_{\bm{q}}({\bm \theta}_3)\right|e^{-Kt}$ (clipped at $10^2$) of the Liouville equation. On the vertical, horizontal, and diagonal lines satisfying $\theta_1 = \theta_2$, $\theta_2 = \theta_3$, and $\theta_3 = \theta_1$, respectively, the solution diverges but is also plotted in black.
    (d) Results for $N=10$ oscillators with $\delta = 0$.
    Evolution of the phases ${\bm \theta}_{10}(t)= (\theta_1(t), \theta_2(t), \ldots, \theta_{10}(t))$ (d-1),
    $\left|\psi^{10}_{\bm{q}}({\bm \theta}_{10}(t))\right|$ for  
    (d-2),
    and $\left|\Psi^{10}_{\bm{q}_1,\bm{q}_2}({\bm \theta}_{10}(t))\right|$
    (d-3) with time $t$.
    }
    \label{fig:Kuramoto--Sakaguchi_model}
\end{figure}

\subsection{Example 3. Higher-order Kuramoto model}

As the last example, we consider a higher-order Kuramoto model.
Recently, systems with higher-order interactions have gained significant attention due to their rich dynamical behaviors beyond pairwise interactions~\cite{tanaka2011multistable,skardal2020higher,leon2024anomalous,leon2025theory,jain2025higher,fujii2025emergence}. 
Among several models with higher-order interactions, we consider the following Kuramoto-type model~\cite{skardal2020higher,leon2024anomalous,leon2025theory}:
\begin{align}
\label{eq:higher_order}
    \frac{d\theta_j}{dt} = \omega + \frac{K}{N^2} \sum_{k,l=1}^{N} \sin(2\theta_k - \theta_l - \theta_j + \delta).
\end{align}
The right-hand side of this equation can be rewritten as
\begin{align}
\frac{d\theta_j}{dt} 
    &= \omega + \frac{K}{N^2} \sum_{k,l=1}^{N} \big[
        \sin(2\theta_k - \theta_l + \delta) \cos \theta_j
        - \cos(2\theta_k - \theta_l + \delta) \sin \theta_j
    \big].
\end{align}
This corresponds to the following case in Eq.~\eqref{eq:state_dependent_equation}:
\begin{align}
    f(\bm{\theta}_N, t) =& \omega, \cr
    g(\bm{\theta}_N, t) =& \frac{K}{N^2} \sum_{k,l=1}^N \sin(2\theta_k - \theta_l + \delta), \cr
    h(\bm{\theta}_N, t) =& -\frac{K}{N^2} \sum_{k,l=1}^N \cos(2\theta_k - \theta_l + \delta).
\end{align}
Using the above relations, we obtain
\begin{align}
    \Lambda(\bm{\theta}_N) = - 2K R_1^2(\bm{\theta}_N) \cos \delta + K R_2^2(\bm{\theta}_N) \cos \delta,
\end{align}
where $R_1(\bm{\theta}_N)$ is the Kuramoto order parameter~\cite{Kuramoto1984chemical} and $R_2(\bm{\theta}_N)$ is the second order parameter~\cite{daido1992order} defined as
\begin{align}
    R_1(\bm{\theta}_N) = \left|\frac{1}{N}\sum_{j=1}^{N}e^{i\theta_j}\right|,\quad R_2(\bm{\theta}_N) = \left|\frac{1}{N}\sum_{j=1}^{N}e^{i2\theta_j}\right|.
\end{align}
These quantities characterize the degree of synchrony and two-cluster formation in the oscillator population, and they are time-dependent functions in general. 
Therefore, for general $\delta$, $\Lambda$ is not constant and $\psi^N_{\bm{q}}$ is not a PF eigenfunction.
However, the ratio $\Psi^N_{\bm{q}_1,\bm{q}_2}$ still forms an invariant by Theorem~\ref{thm:1}.

Only when $\delta = \pm \frac{\pi}{2}$, $\psi^N_{\bm{q}}$ becomes the PF eigenfunction with eigenvalue $0$, namely, it gives a stationary distribution. 
However, in contrast to the Kuramoto--Sakaguchi case, a constant function $c$ is not a PF eigenfunction of this model.
Indeed, introducing
\begin{align}
F_j(\bm{\theta}_N)
:= \omega + \frac{K}{N^2}\sum_{k,l=1}^{N}\sin(2\theta_k-\theta_l-\theta_j+\delta),
\end{align}
we obtain 
\begin{align}
\mathcal{P}c
= -\sum_{j=1}^{N}\frac{\partial}{\partial\theta_j}\bigl(F_j(\bm{\theta}_N)c\bigr)
= -c\sum_{j=1}^{N}\frac{\partial F_j(\bm{\theta}_N)}{\partial\theta_j}.
\end{align}
Thus, it follows that $\mathcal{P}c = \Lambda_c(\bm{\theta}_N)c$ with
\begin{align}
\Lambda_c(\bm{\theta}_N)
= \frac{K}{N^2}\sum_{j,k,l=1}^{N}\cos(2\theta_k-\theta_l-\theta_j+\delta),
\end{align}
which generally depends on $\bm{\theta}_N$. Hence $c$ is not a PF eigenfunction.
Figure~\ref{fig:higher-order_model}(a) and (b) show the results for $\delta = \frac{\pi}{2}$.
In Fig.~\ref{fig:higher-order_model}(a-1), evolution of the distribution of the phases is plotted for $N=3$, which is obtained by simulating the higher-order Kuramoto model from $10^5$ initial conditions randomly sampled with weights proportional to the PF eigenfunction $\left|\psi^3_{\bm{q}}({\bm \theta}_3)\right|$ (clipped at $10^2$). Since $\Lambda = 0$ in this case, the distribution remains approximately stationary.
\par

Figure~\ref{fig:higher-order_model}(a-2) shows the stationary solution of the Liouville equation for $N=3$,
$\psi^3_{\bm{q}}({\bm \theta}_3)$.
This function is divergent on the horizontal, vertical, and diagonal lines, but is plotted in black as before.
The phase distribution obtained by simulating the higher-order Kuramoto model is in good agreement with the stationary solution $\left|\psi^3_{\bm {q}}(\bm{\theta}_{3})\right|$ of the corresponding Liouville equation.
Figure~\ref{fig:higher-order_model}(b) shows the results for $N=10$, where time evolution of 
phases $\bm{\theta}_{10} = (\theta_1, \theta_2, \ldots, \theta_{10})$ (b-1),
two independent PF eigenfunctions $\left|\psi^{10}_{\bm{q}}(\bm{\theta}_{10}(t))\right|$ 
(b-2),
and the ratio $\left|\Psi^{10}_{\bm{q}_1,\bm{q}_2}(\bm{\theta}_{10}(t))\right|$ (b-3).
The ratio $\Psi^{10}_{\bm{q}_1,\bm{q}_2}(\bm{\theta}_{10}(t))$ remains invariant under the non-stationary evolution of the oscillator phases.
We also show the results for $\delta = 0$, where the oscillators are asymptotically synchronized in Fig.~\ref{fig:higher-order_model}(c). In this case, we see that $\left|\psi^{10}_{\bm{q}}(\bm{\theta}_{10}(t))\right|$ is not constant but the ratio $\Psi^{10}_{\bm{q}_1,\bm{q}_2}(\bm{\theta}_{10}(t))$ remains invariant.

\begin{figure}[h]
\includegraphics[width=\linewidth]{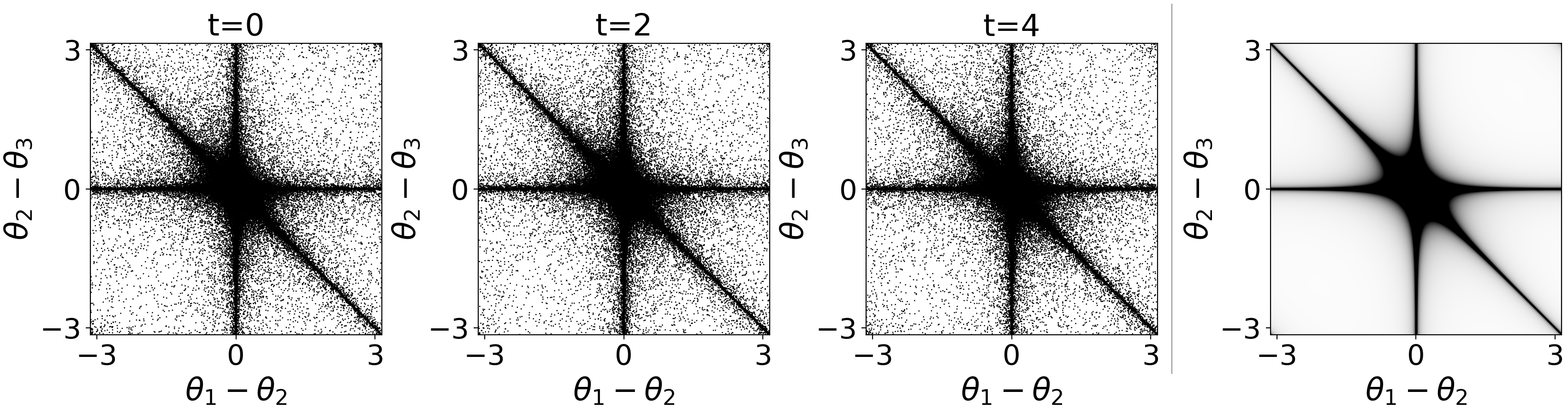}\\
  \hspace{55mm}(a-1)\hspace{85mm}(a-2)\\
  \includegraphics[width=\linewidth]{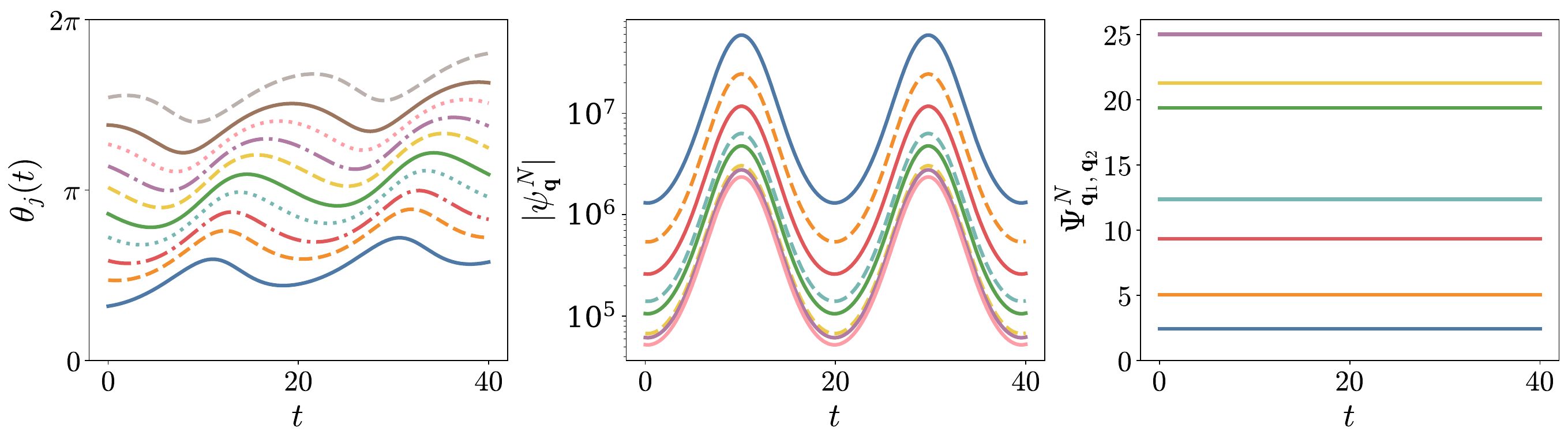}\\
\hspace{10mm}(b-1)\hspace{55mm}(b-2)\hspace{52mm}(b-3)\\
  \includegraphics[width=\linewidth]{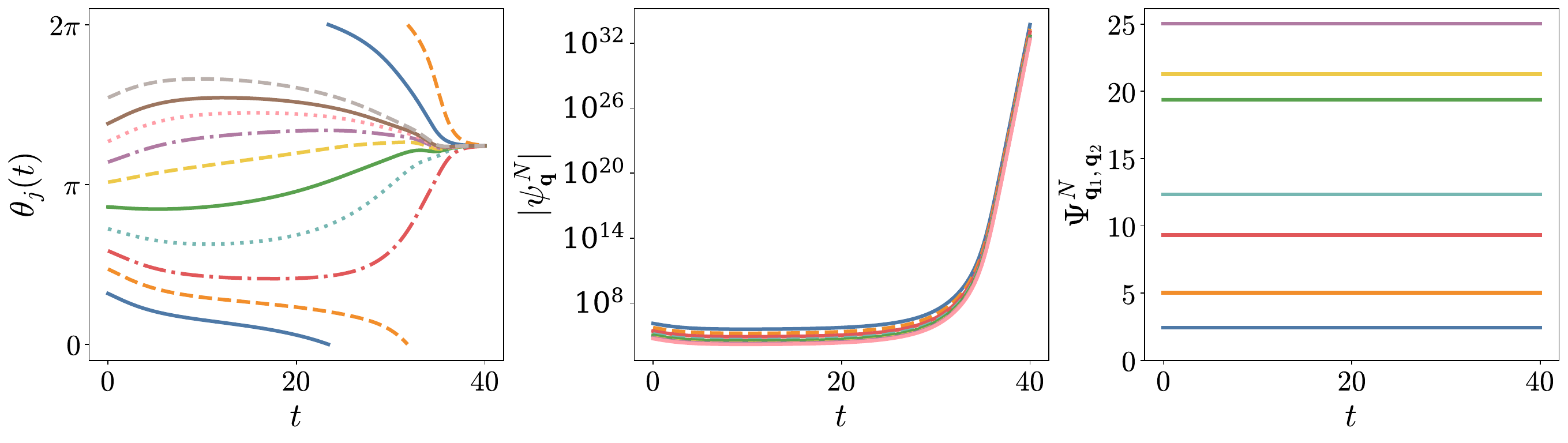}\\
\hspace{10mm}(c-1)\hspace{55mm}(c-2)\hspace{52mm}(c-3) 

\caption{Higher-order Kuramoto model with $K=1$, $\omega = 0$.
(a) Results for $N=3$ with $\delta = \pi/2$.
(a-1) Evolution of the distribution of the phases ${\bm \theta}_3 = (\theta_1, \theta_2, \theta_3)$ plotted on the $(\theta_1-\theta_2) - (\theta_2-\theta_3)$ plane obtained by simulating the higher-order Kuramoto model from $10^5$ random initial conditions sampled with the weights proportional to $\left|\psi^3_{\bm{q}}({\bm \theta}_3)\right|$ (clipped at $10^2$).  Each black point corresponds to a point ${\bm \theta}_3$ in the state space.
(a-2) Stationary solution of the Liouville equation, $\left|\psi^3_{\bm{q}}({\bm \theta}_3)\right|$ (clipped at $10^2$). As before, $\left|\psi^3_{\bm{q}}({\bm \theta}_3)\right|$ diverges on the vertical, horizontal, and diagonal lines, but is plotted in black.
(b) Results for $N=10$ with $\delta=\pi/2$.
Time evolution of the phases ${\bm \theta}_{10}(t) = (\theta_1(t), \theta_2(t), \ldots, \theta_{10}(t))$ (b-1),
$\left|\psi^{10}_{\bm{q}}({\bm \theta}_{10}(t))\right|$
(b-2),
and $\left|\Psi^{10}_{\bm{q}_1,\bm{q}_2}({\bm \theta}_{10}(t))\right|$ (b-3) with time $t$.
(c) Results for $N=10$ with $\delta=0$.
Time evolution of the phases ${\bm \theta}_{10}(t) = (\theta_1(t), \theta_2(t), \ldots, \theta_{10}(t))$ (c-1),
$\left|\psi^{10}_{\bm{q}}({\bm \theta}_{10}(t))\right|$
(c-2),
and $\left|\Psi^{10}_{\bm{q}_1,\bm{q}_2}({\bm \theta}_{10}(t))\right|$ (c-3) with time $t$.
}
\label{fig:higher-order_model}
\end{figure}

\section{Summary}

In this paper, we have exploited a simple relationship between the Perron--Frobenius and Koopman eigenfunctions for a certain class of many-body oscillator systems described by phase models to construct their invariants from a spectral viewpoint.
By taking the ratios of the functions that share the same local growth rate under the Liouvillian dynamics, we can systematically derive Koopman eigenfunctions with zero eigenvalues, namely, the invariants of the system dynamics.
The resulting invariants coincide with those previously found through different methods. Thus, our approach gives a new, alternative framework for understanding the mathematical structure of invariants in coupled-oscillator systems.

Recently, Watanabe--Strogatz theory has been extended to Kuramoto models defined on complex networks, enabling the analysis of invariants in broader and more realistic settings~\cite{cestnik2024integrability}.
Extending our framework to complex networks and systematically investigating the invariants through the Perron--Frobenius and Koopman eigenfunctions constitutes an interesting direction for future research.

\paragraph*{Acknowledgements:}
 K.T. acknowledges JSPS KAKENHI 24K20863 for financial support. H.N. acknowledges JSPS KAKENHI 25H01468, 25K03081, 22K11919, and 22H00516 for financial support.
\paragraph*{Data accessibility:}
This article has no additional data or code.

\clearpage

\appendix

\section{Proof of Theorem~\ref{thm:PF_on_psi}}

We compute each term included in $\mathcal{P}\psi^N_{\bm{q}}$ of Eq.~\eqref{eq:fp11}. 
The first term involving $f$ can be expanded as 
\begin{align}
    &\sum_{j=1}^N \frac{\partial ( f\psi^N_{\bm{q}} )}{\partial \theta_j} = \psi^N_{\bm{q}}\sum_{j=1}^N \bigg[\frac{\partial f}{\partial \theta_j}+ \frac{f}{2}\bigg\{\cot\left(\frac{\theta_{q^{(j-1)}}-\theta_{q^{(j)}}}{2}\right) - \cot\left(\frac{\theta_{q^{(j)}}-\theta_{q^{(j+1)}}}{2}\right)\bigg\}\bigg]\cr
    &= \psi^N_{\bm{q}}\sum_{j=1}^N \frac{\partial f}{\partial \theta_j}.
\end{align}
In the above equation, the term $\{\cdots\}$ vanishes due to the cyclic sum.

Similarly, the second $g \cos \theta_j$ term can be calculated as
\begin{align}
    &\sum_{j=1}^N \frac{\partial g \cos \theta_j \psi_{\bm{q}}^N  }{\partial \theta_j}\cr
    &= \psi^N_{\bm{q}}\sum_{j=1}^N \bigg[\frac{\partial g}{\partial \theta_j}\cos \theta_j+ g\bigg\{- \sin\theta_j+ \frac{1}{2}\cos \theta_{q^{(j)}} \left(\cot\left(\frac{\theta_{q^{(j-1)}}-\theta_{q^{(j)}}}{2}\right) - \cot\left(\frac{\theta_{q^{(j)}}-\theta_{q^{(j+1)}}}{2}\right)\right)\bigg\}\bigg]\cr
    &= \psi^N_{\bm{q}}\sum_{j=1}^N \cos \theta_j \frac{\partial g}{\partial \theta_j},
\end{align}
where the term $\{\cdots\}$ vanishes because of the trigonometric property
\begin{align}
&\sum_{j=1}^N \cos \theta_{q^{(j)}} \left(\cot\left(\frac{\theta_{q^{(j-1)}}-\theta_{q^{(j)}}}{2}\right) - \cot\left(\frac{\theta_{q^{(j)}}-\theta_{q^{(j+1)}}}{2}\right)\right) \cr
&= -\sum_{j=1}^N (\cos\theta_{q^{(j-1)}} -\cos\theta_{q^{(j)}})\cot\left(\frac{\theta_{q^{(j-1)}}-\theta_{q^{(j)}}}{2}\right) \cr
&=\sum_{j=1}^N 2\sin\bigg(\frac{\theta_{q^{(j-1)}}+\theta_{q^{(j)}}}{2}\bigg)\sin\bigg(\frac{\theta_{q^{(j-1)}}-\theta_{q^{(j)}}}{2}\bigg)\cot\left(\frac{\theta_{q^{(j-1)}}-\theta_{q^{(j)}}}{2}\right) \cr
&=\sum_{j=1}^N 2\sin\bigg(\frac{\theta_{q^{(j-1)}}+\theta_{q^{(j)}}}{2}\bigg)\cos\bigg(\frac{\theta_{q^{(j-1)}}-\theta_{q^{(j)}}}{2}\bigg) \cr
&= \sum_{j=1}^N [ \sin \theta_{q^{(j-1)}} + \sin \theta_{q^{(j)}}]
\end{align}
and the cyclic sum,
\begin{align}
    \sum_{j=1}^N \left[-\sin\theta_j + \frac{1}{2} \left(\sin \theta_{q^{(j-1)}} + \sin \theta_{q^{(j)}}\right)\right] = 0.
\end{align}

For the last $h \sin \theta_j$ term, we obtain
\begin{align}
    &\sum_{j=1}^N \frac{\partial  h\sin \theta_j \psi^N_{\bm{q}} }{\partial \theta_j}\cr
    &= \psi^N_{\bm{q}}\sum_{j=1}^N\bigg[\frac{\partial h}{\partial \theta_j}\sin \theta_j
    + h\bigg\{\cos \theta_j + \frac{1}{2}\sin \theta_{q^{(j)}} \left(\cot\left(\frac{\theta_{q^{(j-1)}}-\theta_{q^{(j)}}}{2}\right) - \cot\left(\frac{\theta_{q^
    {(j)}}-\theta_{q^{(j+1)}}}{2}\right)\right)\bigg\}\bigg]\cr
    &= \psi^N_{\bm{q}}\sum_{j=1}^N \frac{\partial h}{\partial \theta_j}\sin \theta_j.
\end{align}
Again, the term $\{\cdots\}$ vanishes because of the trigonometric property
\begin{align}
&\sum_{j=1}^N \sin \theta_{q^{(j)}} \left(\cot\left(\frac{\theta_{q^{(j-1)}}-\theta_{q^{(j)}}}{2}\right) - \cot\left(\frac{\theta_{q^{(j)}}-\theta_{q^{(j+1)}}}{2}\right)\right) \cr
&= -\sum_{j=1}^N \left(\sin\theta_{q^{(j-1)}} -\sin\theta_{q^{(j)}}\right) \cot\left(\frac{\theta_{q^{(j-1)}}-\theta_{q^{(j)}}}{2}\right) \cr
&=-\sum_{j=1}^N 2\cos\bigg(\frac{\theta_{q^{(j-1)}}+\theta_{q^{(j)}}}{2}\bigg)\sin\bigg(\frac{\theta_{q^{(j-1)}}-\theta_{q^{(j)}}}{2}\bigg)\cot\left(\frac{\theta_{q^{(j-1)}}-\theta_{q^{(j)}}}{2}\right) \cr
&=-\sum_{j=1}^N 2\cos\bigg(\frac{\theta_{q^{(j-1)}}+\theta_{q^{(j)}}}{2}\bigg)\cos\bigg(\frac{\theta_{q^{(j-1)}}-\theta_{q^{(j)}}}{2}\bigg) \cr
&=-\sum_{j=1}^N \left[ \cos \theta_{q^{(j-1)}} + \cos \theta_{q^{(j)}}\right]
\end{align}
and the cyclic sum,
\begin{align}
    \sum_{j=1}^N \left[\cos\theta_j - \frac{1}{2} \left(\cos \theta_{q^{(j-1)}} + \cos \theta_{q^{(j)}}\right)\right] = 0.
\end{align}

Combining all the terms, we obtain
\begin{align}
    \mathcal{P} \psi_{\bm{q}}^N(\bm{\theta}_N) = \Lambda(\bm{\theta}_N, t) \psi_{\bm{q}}^N(\bm{\theta}_N),
\end{align}
where
\begin{align}
    \Lambda(\bm{\theta}_N, t)
    := -\sum_{j=1}^N \left(
    \frac{\partial f(\bm{\theta}_N, t)}{\partial \theta_j}
    + \cos \theta_j \frac{\partial g(\bm{\theta}_N, t)}{\partial \theta_j}
    + \sin \theta_j \frac{\partial h(\bm{\theta}_N, t)}{\partial \theta_j}
    \right).
\end{align}

\section{Independent Invariants}
\label{sec:independent_invariants}

\subsection{Riccati equation and cross ratios}

We here discuss the relation of the invariants with the Riccati equation.
While the $N-3$ invariants of Eq.~\eqref{eq:time_dependent_equation} or~\eqref{eq:state_dependent_equation} were originally found by the Watanabe--Strogatz transformation, they can also be obtained from the property of the Riccati equation into which Eq.~\eqref{eq:time_dependent_equation} can be transformed ~\cite{Watanabe1994constants,Marvel2009identical}.
 
The Riccati equation~\cite{reid1972riccati} is a nonlinear ordinary differential equation defined as
\begin{align}
    \label{eq:riccati_equation}
    \frac{dx}{dt} = \alpha(t)x + \beta(t)x^2 + \gamma(t),
\end{align}
where $\alpha(t)$, $\beta(t)$, and $\gamma(t)$ are time-dependent functions. Notably, given four distinct solutions to the Riccati equation, one can construct an invariant quantity termed \textit{cross ratio}~\cite{reid1972riccati}, expressed as follows:
\begin{align}
    \label{eq:cross_ratio}
    \frac{(x_1 - x_2)(x_3 - x_4)}{(x_2 - x_3)(x_4 - x_1)}.
\end{align}
Here, $x_1(t)$, $x_2(t)$, $x_3(t)$, and $x_4(t)$ are the solutions of Eq.~\eqref{eq:riccati_equation} with distinct initial conditions $x_1(0)$, $x_2(0)$, $x_3(0)$, and $x_4(0)$.

Equation~\eqref{eq:time_dependent_equation} is transformed into the form of a Riccati equation by considering the time evolution of the complex variable $x = e^{i\theta}$~\cite{Watanabe1994constants,Marvel2009identical}:
\begin{align}
    \frac{dx}{dt} = \frac{d}{dt}e^{i\theta} &= i\left(f(t) + g(t)\cos \theta + h(t)\sin \theta \right)e^{i\theta} \cr
    &=if(t)x + \frac{i}{2}(g(t)+ih(t))x^2 + \frac{i}{2}(g(t)-ih(t)).
\end{align}
Therefore, Eq.~\eqref{eq:time_dependent_equation} corresponds to the Riccati equation~\eqref{eq:riccati_equation} with the following coefficients:
\begin{align}
    \alpha(t) &= i f(t),\cr
    \beta(t) &= \frac{i}{2}\left(g(t) + i h(t)\right),\cr
    \gamma(t) &= \frac{i}{2}\left(g(t) - i h(t)\right).
\end{align}
It follows that, for four distinct solutions $\theta_1$, $\theta_2$, $\theta_3$, and $\theta_4$ of Eq.~\eqref{eq:time_dependent_equation}, the following quantity:
\begin{align}
    \frac{(e^{i\theta_1} - e^{i\theta_2})(e^{i\theta_3} - e^{i\theta_4})}{(e^{i\theta_2} - e^{i\theta_3})(e^{i\theta_4} - e^{i\theta_1})}
\end{align}
is an invariant. The cross ratio in Eq.~\eqref{eq:cross_ratio} can be equivalently rewritten in terms of sine functions as~(see \cite{Marvel2009identical})
\begin{align}
\label{eq:bracket}
    \langle 1,2,3,4\rangle = \frac{\sin\left(\frac{\theta_1 - \theta_2}{2}\right) \sin\left(\frac{\theta_3 - \theta_4}{2}\right)}{\sin\left(\frac{\theta_2 - \theta_3}{2}\right) \sin\left(\frac{\theta_4 - \theta_1}{2}\right)},
\end{align}
where we introduced the notation $\langle i,j,k,l\rangle$ for the cross ratio.
From $N$ distinct solutions of Eq.~\eqref{eq:time_dependent_equation}, one can construct $N(N-1)(N-2)(N-3)$ (the total number of ordered four indices) such invariants of the form $\langle i,j,k,l\rangle$, but only $N - 3$ of them are independent functions~\cite{Marvel2009identical}. 

\subsection{Perron--Frobenius eigenfunctions and an invariant for $N=4$}
\label{sec:independent}

Before we discuss the relation between the WS invariants and $\Psi^N_{\bm{q}_1, \bm{q}_2}$, we show that only one functionally independent invariant given in the form $\Psi^4_{\bm{q}_1,\bm{q}_2}$ exists when $N=4$. 
The function $\psi^4_{\bm{q}}$ satisfies the following cyclic symmetry:

\begin{align}
    \psi^4_{(q^{(1)}, q^{(2)}, q^{(3)}, q^{(4)})} 
    = \psi^4_{(q^{(2)}, q^{(3)}, q^{(4)}, q^{(1)})} 
    = \psi^4_{(q^{(3)}, q^{(4)}, q^{(1)}, q^{(2)})} 
    = \psi^4_{(q^{(4)}, q^{(1)}, q^{(2)}, q^{(3)})},
\end{align}
and reversal symmetry:
\begin{align}
    \psi^4_{(q^{(1)}, q^{(2)}, q^{(3)}, q^{(4)})}
    = \psi^4_{(q^{(4)}, q^{(3)}, q^{(2)}, q^{(1)})}.
\end{align}
Given these symmetries, only three functions remain:
\begin{align}
    \label{eq:ApxA_psi4}
    \psi^4_{(1,2,3,4)}, \quad
    \psi^4_{(1,3,4,2)}, \quad
    \psi^4_{(1,4,2,3)}.
\end{align}
In addition, one more relation exists:
\begin{align}
    \psi^4_{(1,2,3,4)} + \psi^4_{(1,3,4,2)} + \psi^4_{(1,4,2,3)} = 0,
\end{align}
namely, only two of the functions in Eq.~\eqref{eq:ApxA_psi4} are independent, such as
\begin{align}
        \psi^4_{(1,2,3,4)}, \quad  \psi^4_{(1,3,4,2)}.
\end{align}
From these functions, we can construct only one independent invariant $\Psi^4_{(1,2,3,4), (1,3,4,2)}$. 

\subsection{\texorpdfstring
  {Relationship between $\Psi^N_{\bm{q}}$ and $\langle i,j,k,l\rangle$}
  {Relationship between Psi^N_q and <i,j,k,l>}
}

Here, we show that the set $\bm{\Psi}^N = \{\Psi^N_{\bm{q}_1, \bm{q}_2}\}_{\bm{q}_1, \bm{q}_2}$ includes all invariants of the form $\langle q^{(1)}, q^{(2)}, q^{(3)}, q^{(4)}\rangle$, introduced in Eq.~\eqref{eq:bracket},
and that each function $\Psi^N_{\bm{q}_1, \bm{q}_2}$ can be decomposed into a product of such terms. Consequently, the number of independent invariants in $\bm{\Psi}^N$ is $N - 3$ (except for a constant function, which is a trivial invariant).

Our discussion is as follows. We compare invariants in $\bm{\Psi}^N$ with $\langle i,j,k,l\rangle$, and, using the fact that the number of independent functions of $\langle i,j,k,l\rangle$ is $N-3$~\cite{Marvel2009identical}, we show that the number of functions in $\bm{\Psi}^N$ is also $N-3$.  
To this end, we show that invariants in $\bm{\Psi}^N$ can be constructed as products of $\langle i,j,k,l\rangle$. 
First, we show that $\bm{\Psi}^4$ coincides with $\langle i,j,k,l\rangle$. Then, for $N \geq 5$, we show that $\bm{\Psi}^N$ can be expressed as $\bm{\Psi}^{N-1}$ multiplied by several terms of $\langle i,j,k,l\rangle$.  
It follows that any $\bm{\Psi}^N$ can be expressed as a product of $\langle i,j,k,l\rangle$, implying that the number of independent $\bm{\Psi}^N$ is at most $N-3$.  
Next, we show that the set of $\bm{\Psi}^N$ includes $\langle i,j,k,l\rangle$. This implies that the number of independent $\bm{\Psi}^N$ is at least $N-3$.  
Combining these results, we conclude that the number of independent $\bm{\Psi}^N$ is exactly $N-3$.

We utilize the symmetric properties of the cross ratio $\langle i,j,k,l\rangle$~\cite{Marvel2009identical} that any permutation of the four indices yields an expression functionally dependent on $\langle i,j,k,l\rangle$. That is, all such variants can be written as rational functions of a single cross ratio and thus they are not independent.

We begin with the case $N = 4$. 
As we discussed in Appendix~\ref{sec:independent}, only one independent invariant can be obtained as $\Psi^4_{\bm{q}_1,\bm{q}_2}$ in this case. Let us choose $\Psi^4_{(1,3,4,2), (1,4,2,3)}$ as the representative one. 
By calculating the fraction, we obtain the following relation:
\begin{align}
    \label{eq:Psi4}
    \Psi^4_{(1,3,4,2), (1,4,2,3)} &= \frac{\psi^4_{(1,4,2,3)}}{\psi^4_{(1,3,4,2)}} \cr
    &=\frac{\sin\left( \frac{\theta_{1} - \theta_{3}}{2} \right)\sin\left( \frac{\theta_{3} - \theta_{4}}{2} \right)\sin\left( \frac{\theta_{4} - \theta_{2}}{2} \right)\sin\left( \frac{\theta_{2} - \theta_{1}}{2} \right)}{\sin\left( \frac{\theta_{1} - \theta_{4}}{2} \right)\sin\left( \frac{\theta_{4} - \theta_{2}}{2} \right)\sin\left( \frac{\theta_{2} - \theta_{3}}{2} \right)\sin\left( \frac{\theta_{3} - \theta_{1}}{2} \right)}\cr
    &=-\frac{ \sin\left( \frac{\theta_{1} - \theta_{2}}{2} \right)\sin\left( \frac{\theta_{3} - \theta_{4}}{2} \right)
   }{\sin\left( \frac{\theta_{2} - \theta_{3}}{2} \right)\sin\left( \frac{\theta_{4} - \theta_{1}}{2} \right)}\cr
   &=-\langle 1,2,3,4\rangle.
\end{align}

Though not independent of Eq.~\eqref{eq:Psi4}, we note that $\Psi^4_{(1,4,2,3), (1,2,3,4)}$ and $\Psi^4_{(1,2,3,4), (1,3,4,2)}$ can also be constructed as the ratio of $\psi^4_{\bm{q}}$ and $\psi^4_{\bm{q}'}$ with different permutation $\bm{q}$ and $\bm{q}'$, and we can find the following relations:
\begin{align}
    \Psi^4_{(1,4,2,3), (1,2,3,4)} &= -\langle 1,3,4,2\rangle, \\
    \Psi^4_{(1,2,3,4), (1,3,4,2)} &= -\langle 1,4,2,3\rangle.
\end{align}

Then, let us consider the case $N\geq 5$. 
\begin{lemma}
\label{lem:decomposition}
    Let $N \geq 5$, and let $\Psi^N_{\bm{q}_1, \bm{q}_2}$ be defined as before. Then $\Psi^N_{\bm{q}_1, \bm{q}_2}$ can be reduced to $\Psi^{N-1}_{\bm{q}'_1, \bm{q}'_2}$ or decomposed as a product of $\Psi^{N-1}_{\bm{q}'_1, \bm{q}'_2}$ and invariants $\langle  i,j,k,l\rangle$.
    Here,    
    $\bm{q}'_1$, $\bm{q}'_2$ are the permutations of $\{1,2,\ldots,N-1\}$ obtained by removing $N$ 
    from $\bm{q}_1$ and $\bm{q}_2$:
    \begin{align}
        q_1'^{(l)} &=
        \begin{cases}
            q_1^{(l)}, & \text{if } l < \xi, \\
            q_1^{(l+1)}, & \text{if } l \geq \xi,
        \end{cases} \quad
        q_2'^{(l)} =
        \begin{cases}
            q_2^{(l)}, & \text{if } l < \zeta, \\
            q_2^{(l+1)}, & \text{if } l \geq \zeta,
        \end{cases}
    \end{align}
    where $q_1^{(\xi)} = q_2^{(\zeta)} = N$, and we impose the cyclic property in the indices as $q_{1,2}^{(0)}=q_{1,2}^{(N)}$ and $q_{1,2}^{(N+1)}=q_{1,2}^{(1)}$.
\end{lemma}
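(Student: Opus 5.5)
Write $s_{ab} := \sin\left(\frac{\theta_a-\theta_b}{2}\right)$, so that $1/\psi^N_{\bm q} = \prod_{j=1}^N s_{q^{(j)}q^{(j+1)}}$ and $s_{ba}=-s_{ab}$. The proof is a direct factor-by-factor comparison of $\psi^N$ with $\psi^{N-1}$.

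In $1/\psi^N_{\bm q_1}$ the index $N$ occurs in exactly two adjacent factors, $s_{a_1 N}\,s_{N b_1}$, with $a_1=q_1^{(\xi-1)}$ and $b_1=q_1^{(\xi+1)}$. Deleting $N$ from the cycle $\bm q_1$ gives the cycle $\bm q_1'$, in which these two factors are replaced by the single factor $s_{a_1b_1}$. Hence
\begin{align}
\psi^N_{\bm q_1} = \psi^{N-1}_{\bm q_1'}\, \frac{s_{a_1 b_1}}{s_{a_1 N}\, s_{N b_1}},
\end{align}
and likewise $\psi^N_{\bm q_2} = \psi^{N-1}_{\bm q_2'}\, s_{a_2b_2}/(s_{a_2N}s_{Nb_2})$, with $a_2=q_2^{(\zeta-1)}$ and $b_2=q_2^{(\zeta+1)}$. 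Note that $N\ge5$ guarantees $N-1\ge4$, so $\psi^{N-1}$ is defined and each neighbour pair satisfies $a\neq b$. Dividing gives
\begin{align}
\Psi^N_{\bm q_1,\bm q_2} = \Psi^{N-1}_{\bm q_1',\bm q_2'}\cdot R, \qquad R := \frac{s_{a_2b_2}\, s_{a_1N}\, s_{Nb_1}}{s_{a_1b_1}\, s_{a_2N}\, s_{Nb_2}}.
\end{align}

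It remains to show that $R$ is either $\pm1$ or a product of cross ratios $\langle i,j,k,l\rangle$, defined in Eq.~\eqref{eq:bracket}. There are three cases.

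\textbf{Case 1:} $\{a_1,b_1\}=\{a_2,b_2\}$. Using only antisymmetry of $s$, we get $R=1$ when $(a_2,b_2)=(a_1,b_1)$. When $(a_2,b_2)=(b_1,a_1)$, substituting gives $R=\frac{s_{b_1a_1}s_{a_1N}s_{Nb_1}}{s_{a_1b_1}s_{b_1N}s_{Na_1}}=-1$, the three sign flips combining to $(-1)^3$. In both subcases $\Psi^N$ reduces to $\pm\Psi^{N-1}$.

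\textbf{Case 2:} the pairs share exactly one index, say $a_1=a_2=a$ with $b_1\ne b_2$. The other coincidences reduce to this one by reversal or by antisymmetry of $s$. Then
\begin{align}
R=\frac{s_{ab_2}\,s_{Nb_1}}{s_{ab_1}\,s_{Nb_2}}.
\end{align}
Up to sign, this equals the cross ratio $\langle a, b_1, N, b_2\rangle = \frac{s_{ab_1}s_{Nb_2}}{s_{b_1N}s_{b_2a}}$ raised to the power $-1$. So $R$ is a single cross ratio (or its reciprocal, which is still a rational function of a cross ratio).

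\textbf{Case 3:} $a_1,b_1,a_2,b_2$ are all distinct. Here I would write
\begin{align}
R = \frac{s_{a_1N}s_{Nb_1}}{s_{a_1b_1}\,s_{a_2 b_1}}\cdot\frac{s_{a_2b_1}\,s_{a_2b_2}}{s_{a_2N}s_{Nb_2}}.
\end{align}
I would then aim to split $R$ into a product of two cross ratios sharing one auxiliary factor. The attempt is to insert and remove a factor such as $s_{a_2b_1}$ (or $s_{a_1b_2}$) and check whether each resulting quotient is a valid cross ratio.

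The main obstacle is this last step. A cross ratio on indices $(i,j,k,l)$ uses the four "edges" $ij,kl$ upstairs and $jk,li$ downstairs. So I need each auxiliary factor chosen to complete a $4$-cycle with both $N$-edges. For example, I would try
\begin{align}
\frac{s_{a_1N}s_{b_1b_2}}{s_{Nb_2}s_{b_1a_1}}\cdot\frac{s_{Nb_1}s_{a_2b_2}}{s_{b_1b_2}s_{a_2N}}\cdot\frac{s_{b_1a_1}}{s_{a_1b_1}},
\end{align}
which is $-\langle a_1,N,b_2,b_1\rangle\cdot\langle N,b_1,b_2,a_2\rangle$ after matching orientations.

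The sign bookkeeping under the antisymmetry $s_{ab}=-s_{ba}$ is the only delicate part. I would verify the final identity by cancelling factor by factor, and then conclude that $\Psi^N_{\bm q_1,\bm q_2}=\pm\Psi^{N-1}_{\bm q_1',\bm q_2'}\prod\langle i,j,k,l\rangle$.
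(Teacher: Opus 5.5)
Your proof is correct and follows the paper's own route. You use the same factorization $\Psi^N_{\bm q_1,\bm q_2}=\Psi^{N-1}_{\bm q_1',\bm q_2'}\,R$ and the same case split on how the neighbour pairs of $N$ overlap, merging the paper's four single-overlap cases through the sign flip of $R$ under $a\leftrightarrow b$. In the disjoint case you use the same auxiliary-factor trick, inserting $s_{b_1b_2}$ where the paper inserts $s_{a_1a_2}$, and your identity $R=-\langle a_1,N,b_2,b_1\rangle\langle N,b_1,b_2,a_2\rangle$ checks out.

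Your value $R=-1$ in the swapped subcase of Case~1 is also right. The paper states $\Psi^N_{\bm q_1,\bm q_2}=\Psi^{N-1}_{\bm q_1',\bm q_2'}$ there because it overlooks the factor $s_{a_2b_2}/s_{a_1b_1}=-1$; this sign discrepancy does not affect the count of independent invariants.
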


\begin{proof}
From Eq.~\eqref{eq:eigenfunction}, we can decompose $\Psi^N_{\bm{q}_1, \bm{q}_2}$ as
\begin{align}
    \Psi^N_{\bm{q}_1, \bm{q}_2}
    = \Psi^{N-1}_{\bm{q}'_1, \bm{q}'_2}\frac{\sin\left( \frac{\theta_{q_1^{(\xi-1)}} - \theta_{q_1^{(\xi)}}}{2} \right)\sin\left( \frac{\theta_{q_1^{(\xi)}} - \theta_{q_1^{(\xi+1)}}}{2} \right)}
           {\sin\left( \frac{\theta_{q_2^{(\zeta-1)}} - \theta_{q_2^{(\zeta)}}}{2} \right)\sin\left( \frac{\theta_{q_2^{(\zeta)}} - \theta_{q_2^{(\zeta+1)}}}{2} \right)}\cdot \frac{\sin\left( \frac{\theta_{q_2^{(\zeta-1)}} - \theta_{q_2^{(\zeta+1)}}}{2} \right)}{\sin\left( \frac{\theta_{q_1^{(\xi-1)}} - \theta_{q_1^{(\xi+1)}}}{2} \right)}.
\end{align}
We analyze this by dividing into the following six cases. 
\begin{enumerate}
\item[\textit{Case.1:}] $\{{q_1^{(\xi-1)}},{q_1^{(\xi+1)}}\} = \{{q_2^{(\zeta-1)}}, {q_2^{(\zeta+1)}}\}$\\ \\
In this case, the following equation holds: 
\begin{align}
    \frac{\sin\left( \frac{\theta_{q_1^{(\xi-1)}} - \theta_{q_1^{(\xi)}}}{2} \right)\sin\left( \frac{\theta_{q_1^{(\xi)}} - \theta_{q_1^{(\xi+1)}}}{2} \right)}
           {\sin\left( \frac{\theta_{q_2^{(\zeta-1)}} - \theta_{q_2^{(\zeta)}}}{2} \right)\sin\left( \frac{\theta_{q_2^{(\zeta)}} - \theta_{q_2^{(\zeta+1)}}}{2} \right)} = 1.
\end{align}
Thus, we obtain
\begin{align}
    \Psi^N_{\bm{q}_1, \bm{q}_2} = \Psi^{N-1}_{\bm{q}'_1, \bm{q}'_2}.
\end{align}

This includes both the aligned case $(q_1^{(\xi-1)},q_1^{(\xi+1)})=(q_2^{(\zeta-1)},q_2^{(\zeta+1)})$ and the swapped case $(q_1^{(\xi-1)},q_1^{(\xi+1)})=(q_2^{(\zeta+1)},q_2^{(\zeta-1)})$, and in either situation the sine factors cancel pairwise, yielding the ratio $1$.

\item[\textit{Case.2:}] $q_1^{(\xi-1)} = q_2^{(\zeta-1)} = M$, ${q_1^{(\xi+1)}} \ne {q_2^{(\zeta+1)}}$ \\ \\
In this case, the following relation holds:
\begin{align}
    &\frac{\sin\left( \frac{\theta_{q_1^{(\xi-1)}} - \theta_{q_1^{(\xi)}}}{2} \right)\sin\left( \frac{\theta_{q_1^{(\xi)}} - \theta_{q_1^{(\xi+1)}}}{2} \right)\sin\left( \frac{\theta_{q_2^{(\zeta-1)}} - \theta_{q_2^{(\zeta+1)}}}{2} \right)}
   {\sin\left( \frac{\theta_{q_2^{(\zeta-1)}} - \theta_{q_2^{(\zeta)}}}{2} \right)\sin\left( \frac{\theta_{q_2^{(\zeta)}} - \theta_{q_2^{(\zeta+1)}}}{2} \right)\sin\left( \frac{\theta_{q_1^{(\xi-1)}} - \theta_{q_1^{(\xi+1)}}}{2} \right)}\cr
    &=\frac{\sin\left( \frac{\theta_{q_1^{(\xi)}} - \theta_{q_1^{(\xi+1)}}}{2} \right)\sin\left( \frac{\theta_{q_2^{(\zeta-1)}} - \theta_{q_2^{(\zeta+1)}}}{2} \right)}
   {\sin\left( \frac{\theta_{q_2^{(\zeta)}} - \theta_{q_2^{(\zeta+1)}}}{2} \right)\sin\left( \frac{\theta_{q_1^{(\xi-1)}} - \theta_{q_1^{(\xi+1)}}}{2} \right)}\cr
   &=\frac{\sin\left( \frac{\theta_{N} - \theta_{q_1^{(\xi+1)}}}{2} \right)\sin\left( \frac{\theta_{M} - \theta_{q_2^{(\zeta+1)}}}{2} \right)}
   {\sin\left( \frac{\theta_{N} - \theta_{q_2^{(\zeta+1)}}}{2} \right)\sin\left( \frac{\theta_{M} - \theta_{q_1^{(\xi+1)}}}{2} \right)}\cr
   &= \langle N,q_1^{(\xi+1)},M, q_2^{(\zeta+1)}\rangle.
\end{align}
Thus, we obtain
\begin{align}
    \Psi^N_{\bm{q}_1, \bm{q}_2} = \Psi^{N-1}_{\bm{q}'_1, \bm{q}'_2}\langle N,q_1^{(\xi+1)},M, q_2^{(\zeta+1)}\rangle.
\end{align}

\item[\textit{Case.3:}] ${q_1^{(\xi+1)}} = {q_2^{(\zeta+1)}} = M$, ${q_1^{(\xi-1)}} \ne {q_2^{(\zeta-1)}}$\\ \\
In this case, the following relation holds:
\begin{align}
    &\frac{\sin\left( \frac{\theta_{q_1^{(\xi-1)}} - \theta_{q_1^{(\xi)}}}{2} \right)\sin\left( \frac{\theta_{q_1^{(\xi)}} - \theta_{q_1^{(\xi+1)}}}{2} \right)\sin\left( \frac{\theta_{q_2^{(\zeta-1)}} - \theta_{q_2^{(\zeta+1)}}}{2} \right)}
   {\sin\left( \frac{\theta_{q_2^{(\zeta-1)}} - \theta_{q_2^{(\zeta)}}}{2} \right)\sin\left( \frac{\theta_{q_2^{(\zeta)}} - \theta_{q_2^{(\zeta+1)}}}{2} \right)\sin\left( \frac{\theta_{q_1^{(\xi-1)}} - \theta_{q_1^{(\xi+1)}}}{2} \right)}\cr
    &=\frac{\sin\left( \frac{\theta_{q_1^{(\xi-1)}} - \theta_{q_1^{(\xi)}}}{2} \right)\sin\left( \frac{\theta_{q_2^{(\zeta-1)}} - \theta_{q_2^{(\zeta+1)}}}{2} \right)}
   {\sin\left( \frac{\theta_{q_2^{(\zeta-1)}} - \theta_{q_2^{(\zeta)}}}{2} \right)\sin\left( \frac{\theta_{q_1^{(\xi-1)}} - \theta_{q_1^{(\xi+1)}}}{2} \right)}\cr
    &=\frac{\sin\left( \frac{\theta_{q_1^{(\xi-1)}} - \theta_{N}}{2} \right)\sin\left( \frac{\theta_{q_2^{(\zeta-1)}} - \theta_{M}}{2} \right)}
   {\sin\left( \frac{\theta_{q_2^{(\zeta-1)}} - \theta_{N}}{2} \right)\sin\left( \frac{\theta_{q_1^{(\xi-1)}} - \theta_{M}}{2} \right)}\cr
    &= \langle N,q_1^{(\xi-1)},M, q_2^{(\zeta-1)}\rangle.
\end{align}
Thus, we obtain
\begin{align}
    \Psi^N_{\bm{q}_1, \bm{q}_2} = \Psi^{N-1}_{\bm{q}'_1, \bm{q}'_2}\langle N,q_1^{(\xi-1)},M, q_2^{(\zeta-1)}\rangle.
\end{align}

\item[\textit{Case.4:}] ${q_1^{(\xi-1)}} = {q_2^{(\zeta+1)}} = M$, ${q_1^{(\xi+1)}} \ne {q_2^{(\zeta-1)}}$\\ \\
In this case, the following relation holds:
\begin{align}
    &\frac{\sin\left( \frac{\theta_{q_1^{(\xi-1)}} - \theta_{q_1^{(\xi)}}}{2} \right)\sin\left( \frac{\theta_{q_1^{(\xi)}} - \theta_{q_1^{(\xi+1)}}}{2} \right)\sin\left( \frac{\theta_{q_2^{(\zeta-1)}} - \theta_{q_2^{(\zeta+1)}}}{2} \right)}
   {\sin\left( \frac{\theta_{q_2^{(\zeta-1)}} - \theta_{q_2^{(\zeta)}}}{2} \right)\sin\left( \frac{\theta_{q_2^{(\zeta)}} - \theta_{q_2^{(\zeta+1)}}}{2} \right)\sin\left( \frac{\theta_{q_1^{(\xi-1)}} - \theta_{q_1^{(\xi+1)}}}{2} \right)}\cr
    &=-\frac{\sin\left( \frac{\theta_{q_1^{(\xi)}} - \theta_{q_1^{(\xi+1)}}}{2} \right)\sin\left( \frac{\theta_{q_2^{(\zeta-1)}} - \theta_{q_2^{(\zeta+1)}}}{2} \right)}
   {\sin\left( \frac{\theta_{q_2^{(\zeta-1)}} - \theta_{q_2^{(\zeta)}}}{2} \right)\sin\left( \frac{\theta_{q_1^{(\xi-1)}} - \theta_{q_1^{(\xi+1)}}}{2} \right)}\cr
    &=-\frac{\sin\left( \frac{\theta_{N} - \theta_{q_1^{(\xi+1)}}}{2} \right)\sin\left( \frac{\theta_{q_2^{(\zeta-1)}} - \theta_{M}}{2} \right)}
   {\sin\left( \frac{\theta_{q_2^{(\zeta-1)}} - \theta_{N}}{2} \right)\sin\left( \frac{\theta_{M} - \theta_{q_1^{(\xi+1)}}}{2} \right)}\cr
    &= -\langle N,q_1^{(\xi+1)},M, q_2^{(\zeta-1)}\rangle.
\end{align}
Thus, we obtain
\begin{align}
    \Psi^N_{\bm{q}_1, \bm{q}_2} = -\Psi^{N-1}_{\bm{q}'_1, \bm{q}'_2}\langle N,q_1^{(\xi+1)},M, q_2^{(\zeta-1)}\rangle.
\end{align}

\item[\textit{Case.5:}] ${q_1^{(\xi+1)}} = {q_2^{(\zeta-1)}} = M$, ${q_1^{(\xi-1)}} \ne {q_2^{(\zeta+1)}}$\\ \\
In this case, the following relation holds:
\begin{align}
    &\frac{\sin\left( \frac{\theta_{q_1^{(\xi-1)}} - \theta_{q_1^{(\xi)}}}{2} \right)\sin\left( \frac{\theta_{q_1^{(\xi)}} - \theta_{q_1^{(\xi+1)}}}{2} \right)\sin\left( \frac{\theta_{q_2^{(\zeta-1)}} - \theta_{q_2^{(\zeta+1)}}}{2} \right)}
   {\sin\left( \frac{\theta_{q_2^{(\zeta-1)}} - \theta_{q_2^{(\zeta)}}}{2} \right)\sin\left( \frac{\theta_{q_2^{(\zeta)}} - \theta_{q_2^{(\zeta+1)}}}{2} \right)\sin\left( \frac{\theta_{q_1^{(\xi-1)}} - \theta_{q_1^{(\xi+1)}}}{2} \right)}\cr
    &=-\frac{\sin\left( \frac{\theta_{q_1^{(\xi-1)}} - \theta_{q_1^{(\xi)}}}{2} \right)\sin\left( \frac{\theta_{q_2^{(\zeta-1)}} - \theta_{q_2^{(\zeta+1)}}}{2} \right)}
   {\sin\left( \frac{\theta_{q_2^{(\zeta)}} - \theta_{q_2^{(\zeta+1)}}}{2} \right)\sin\left( \frac{\theta_{q_1^{(\xi-1)}} - \theta_{q_1^{(\xi+1)}}}{2} \right)}\cr
    &=-\frac{\sin\left( \frac{\theta_{q_1^{(\xi-1)}} - \theta_{N}}{2} \right)\sin\left( \frac{\theta_{M} - \theta_{q_2^{(\zeta+1)}}}{2} \right)}
   {\sin\left( \frac{\theta_{N} - \theta_{q_2^{(\zeta+1)}}}{2} \right)\sin\left( \frac{\theta_{q_1^{(\xi-1)}} - \theta_{M}}{2} \right)}\cr
    &= -\langle N,q_1^{(\xi-1)},M, q_2^{(\zeta+1)}\rangle.
\end{align}
Thus, we obtain
\begin{align}
    \Psi^N_{\bm{q}_1, \bm{q}_2} = -\Psi^{N-1}_{\bm{q}'_1, \bm{q}'_2}\langle N,q_1^{(\xi-1)},M, q_2^{(\zeta+1)}\rangle.
\end{align}

\item[\textit{Case.6:}] $\{{q_1^{(\xi-1)}},{q_1^{(\xi+1)}}\} \cap \{{q_2^{(\zeta-1)}}, {q_2^{(\zeta+1)}}\} = \varnothing$ 
\\ \\
In this case, the following relation holds:
\begin{align}
    &\frac{\sin\left( \frac{\theta_{q_1^{(\xi-1)}} - \theta_{q_1^{(\xi)}}}{2} \right)\sin\left( \frac{\theta_{q_1^{(\xi)}} - \theta_{q_1^{(\xi+1)}}}{2} \right)\sin\left( \frac{\theta_{q_2^{(\zeta-1)}} - \theta_{q_2^{(\zeta+1)}}}{2} \right)}
   {\sin\left( \frac{\theta_{q_2^{(\zeta-1)}} - \theta_{q_2^{(\zeta)}}}{2} \right)\sin\left( \frac{\theta_{q_2^{(\zeta)}} - \theta_{q_2^{(\zeta+1)}}}{2} \right)\sin\left( \frac{\theta_{q_1^{(\xi-1)}} - \theta_{q_1^{(\xi+1)}}}{2} \right)}\cr
    &=\frac{\sin\left( \frac{\theta_{q_1^{(\xi-1)}} - \theta_{q_1^{(\xi)}}}{2} \right)\sin\left( \frac{\theta_{q_1^{(\xi)}} - \theta_{q_1^{(\xi+1)}}}{2} \right)\sin\left( \frac{\theta_{q_2^{(\zeta-1)}} - \theta_{q_2^{(\zeta+1)}}}{2} \right)}
   {\sin\left( \frac{\theta_{q_2^{(\zeta-1)}} - \theta_{q_2^{(\zeta)}}}{2} \right)\sin\left( \frac{\theta_{q_2^{(\zeta)}} - \theta_{q_2^{(\zeta+1)}}}{2} \right)\sin\left( \frac{\theta_{q_1^{(\xi-1)}} - \theta_{q_1^{(\xi+1)}}}{2} \right)}
   \cdot \frac{\sin\left( \frac{\theta_{q_1^{(\xi-1)}} - \theta_{q_2^{(\zeta-1)}}}{2} \right)}{\sin\left( \frac{\theta_{q_1^{(\xi-1)}} - \theta_{q_2^{(\zeta-1)}}}{2} \right)}\cr
    &=\frac{\sin\left( \frac{\theta_{N} - \theta_{q_1^{(\xi+1)}}}{2} \right)\sin\left( \frac{\theta_{q_1^{(\xi-1)}} - \theta_{q_2^{(\zeta-1)}}}{2} \right)}{\sin\left( \frac{\theta_{q_1^{(\xi-1)}} - \theta_{q_1^{(\xi+1)}}}{2} \right)\sin\left( \frac{\theta_{q_2^{(\zeta-1)}} - \theta_{N}}{2} \right)}\cdot
    \frac{\sin\left( \frac{\theta_{q_1^{(\xi-1)}} - \theta_{N}}{2} \right)\sin\left( \frac{\theta_{q_2^{(\zeta-1)}} - \theta_{q_2^{(\zeta+1)}}}{2} \right)}{\sin\left( \frac{\theta_{q_1^{(\xi-1)}} - \theta_{q_2^{(\zeta-1)}}}{2} \right)\sin\left( \frac{\theta_{N} - \theta_{q_2^{(\zeta+1)}}}{2} \right)}
    \cr
    &=-\langle N, q_1^{(\xi+1)}, q_1^{(\xi-1)}, q_2^{(\zeta-1)}\rangle\langle N, q_1^{(\xi-1)}, q_2^{(\zeta-1)}, q_2^{(\zeta+1)}\rangle.
\end{align}
Thus, we obtain
\begin{align}
    \Psi^N_{\bm{q}_1, \bm{q}_2}= -\Psi^{N-1}_{\bm{q}'_1, \bm{q}'_2}
    \langle N, q_1^{(\xi+1)}, q_1^{(\xi-1)}, q_2^{(\zeta-1)}\rangle\langle N, q_1^{(\xi-1)}, q_2^{(\zeta-1)}, q_2^{(\zeta+1)}\rangle.
\end{align}
\end{enumerate}
From the above results, which cover all possible cases,  $\Psi^N_{\bm{q}_1, \bm{q}_2}$ is reduced to $\Psi^{N-1}_{\bm{q}'_1, \bm{q}'_2}$ or decomposed as a product of $\Psi^{N-1}_{\bm{q}'_1, \bm{q}'_2}$ and invariants $\langle  i,j,k,l\rangle$.
\end{proof}

Using Lemma~\ref{lem:decomposition}, we obtain the following Theorem by mathematical induction.
\begin{theorem}
\label{thm:atmost}
    For all $N \geq 4$, the invariant $\Psi^N_{\bm{q}_1, \bm{q}_2}$ can be decomposed into a product of  
    invariants of the form $\langle i,j,k,l\rangle$. 
\end{theorem}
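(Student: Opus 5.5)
The proof will be an induction on $N$, with Lemma~\ref{lem:decomposition} as the inductive step and the explicit $N=4$ computation as the base case. Throughout, "a product of invariants of the form $\langle i,j,k,l\rangle$" is taken to allow an overall sign $\pm1$ and the trivial empty product. This is harmless, since a constant sign does not affect functional independence. It is also necessary, because the $N=4$ relations already carry a minus sign, e.g.\ $\Psi^4_{(1,3,4,2),(1,4,2,3)}=-\langle 1,2,3,4\rangle$.

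\emph{Base case $N=4$.} By the cyclic and reversal symmetries of $\psi^4_{\bm q}$ noted in Appendix~\ref{sec:independent}, every $\psi^4_{\bm q}$ equals one of $\psi^4_{(1,2,3,4)}$, $\psi^4_{(1,3,4,2)}$, $\psi^4_{(1,4,2,3)}$. Consequently every $\Psi^4_{\bm q_1,\bm q_2}$ is one of the following: the constant $1$, one of the three ratios computed in Eq.~\eqref{eq:Psi4} and the two relations after it, or the reciprocal of one of those ratios. The three ratios are $-\langle 1,2,3,4\rangle$, $-\langle 1,3,4,2\rangle$ and $-\langle 1,4,2,3\rangle$. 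For the reciprocals, I will use the bracket symmetry $\langle i,j,k,l\rangle^{-1}=\langle j,k,l,i\rangle$, which follows directly from Eq.~\eqref{eq:bracket} by cycling the indices. This shows that each reciprocal is again $\pm$ a single bracket, which settles $N=4$.

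\emph{Inductive step.} Let $N\ge5$ and assume the statement holds for $N-1$. For a pair $\bm q_1,\bm q_2$, Lemma~\ref{lem:decomposition} gives
$\Psi^N_{\bm q_1,\bm q_2}=\pm\Psi^{N-1}_{\bm q_1',\bm q_2'}\,\Pi$,
where $\Pi$ is an empty product (Case 1), a single bracket (Cases 2--5), or a product of two brackets (Case 6). The objects $\bm q_1'$ and $\bm q_2'$ are genuine cyclic permutations of $\{1,\dots,N-1\}$, so $\Psi^{N-1}_{\bm q_1',\bm q_2'}$ is a function of $\theta_1,\dots,\theta_{N-1}$ of exactly the type covered by the induction hypothesis. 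Applying that hypothesis writes it as $\pm$ a product of brackets, and substituting back completes the step. Since the lemma's six cases are exhaustive, no further case analysis is required.

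The only delicate point is the bookkeeping in the inductive step. Two things must be checked. First, the removal of $N$ really produces the cyclic product $\psi^{N-1}_{\bm q'}$: the two factors adjacent to $N$ are replaced by $\sin\bigl((\theta_{q^{(\xi-1)}}-\theta_{q^{(\xi+1)}})/2\bigr)$, which is exactly what the decomposition in the lemma's proof accounts for. Second, the brackets appearing in Cases 2--6 always have four distinct indices. In Cases 2--5, $N$, $M$ and the two mismatched neighbours are distinct by hypothesis. In Case 6 the indices are distinct because the neighbour sets are disjoint and $q_1^{(\xi-1)}\neq q_1^{(\xi+1)}$, which holds since $N-1\ge4\ge3$. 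Once these checks are in place, the induction closes and the theorem follows.
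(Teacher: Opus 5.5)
Your proposal is correct and takes the same route as the paper: induction on $N$, with the $N=4$ computations of Appendix~B as the base case and Lemma~\ref{lem:decomposition} as the inductive step. Your explicit treatment of the overall sign, of the reciprocal ratios via $\langle i,j,k,l\rangle^{-1}=\langle j,k,l,i\rangle$, and of the distinctness of the bracket indices fills in bookkeeping the paper leaves implicit. The sign is in fact absorbable, since $\langle 1,2,3,4\rangle\langle 1,3,4,2\rangle\langle 1,4,2,3\rangle=-1$.
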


We have shown that each $\Psi^N_{\bm{q}_1, \bm{q}_2}$ can be expressed as a product of cross-ratio invariants $\langle i,j,k,l\rangle$. Since the number of independent cross ratios for $N$ oscillators is known to be at most $N - 3$, it follows that the number of independent functions in $\bm{\Psi}^N = \{\Psi^N_{\bm{q}_1, \bm{q}_2}\}_{\bm{q}_1, \bm{q}_2}$ is at most $N - 3$.

To complete the argument, we now show that the set of all cross-ratio invariants $\{\langle i,j,k,l\rangle\}_{i,j,k,l}$ is contained within the set $\bm{\Psi}^N$.

\begin{theorem}
    \label{thm:atleast}
    For any $N \geq 4$ and any distinct indices $i, j, k, l \in \{1, 2, \ldots, N\}$, the invariant $\langle i,j,k,l\rangle$ can be expressed in the form of $\Psi^N_{\bm{q}_1, \bm{q}_2}$.
\end{theorem}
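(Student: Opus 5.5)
We construct, for given distinct $i,j,k,l$, two explicit permutations $\bm{q}_1,\bm{q}_2$ of $\{1,\ldots,N\}$ that differ only in how $i,j,k,l$ sit inside a common cycle. All other factors of $\psi^N$ then cancel in the ratio, and only the four sines forming $\langle i,j,k,l\rangle$ remain, up to a sign. The idea is to lift the $N=4$ identity \eqref{eq:Psi4} to general $N$ by attaching the remaining $N-4$ indices as a fixed block that contributes identical factors to numerator and denominator.

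Concretely, let $r_1,\ldots,r_{N-4}$ be the indices other than $i,j,k,l$, in any order, and write $s_\alpha(a,b):=\sin\left(\frac{\theta_a-\theta_b}{2}\right)$. Define the cyclic orders
\begin{align}
\bm{q}_1 &= (i, k, l, j, r_1, \ldots, r_{N-4}), \cr
\bm{q}_2 &= (i, l, j, k, r_1, \ldots, r_{N-4}),
\end{align}
which mimic the choice $(1,3,4,2)$ and $(1,4,2,3)$ in \eqref{eq:Psi4}. In $\bm{q}_1$ the consecutive pairs are $(i,k),(k,l),(l,j),(j,r_1)$ together with the chain pairs $(r_1,r_2),\ldots,(r_{N-4},i)$. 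In $\bm{q}_2$ they are $(i,l),(l,j),(j,k),(k,r_1)$ with the same chain pairs. The chain factors $s(r_m,r_{m+1})$ and $s(r_{N-4},i)$ cancel in $\Psi^N_{\bm{q}_1,\bm{q}_2}=\psi^N_{\bm{q}_2}/\psi^N_{\bm{q}_1}$, as does $s(l,j)$. The junction factors $s(j,r_1)$ and $s(k,r_1)$, however, do not cancel. Removing them is the key difficulty.

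I would resolve this by adjusting the construction so that both permutations attach the block through the same element. For example, take
\begin{align}
\bm{q}_1 = (i,j,k,l,\bm{r}), \qquad \bm{q}_2 = (i,k,j,l,\bm{r}),
\end{align}
in which both cycles enter the block from $l$ and return to $i$. The ratio is then
\begin{align}
\Psi^N_{\bm{q}_1,\bm{q}_2}=\frac{s(i,j)\,s(j,k)\,s(k,l)}{s(i,k)\,s(k,j)\,s(j,l)} .
\end{align}
This is only a three-point-like ratio, and I would check directly whether it equals a cross ratio by using $s(j,k)=-s(k,j)$. After that cancellation it reduces to $-s(i,j)s(k,l)/(s(i,k)s(j,l))$, and comparing with \eqref{eq:bracket} gives $-\langle i,j,l,k\rangle$ up to sign conventions ($\langle i,j,l,k\rangle$ has numerator $s(i,j)s(l,k)$ and denominator $s(j,l)s(k,i)$). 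Swapping the roles of the letters, i.e.\ applying the construction to a relabelled quadruple, then yields $\pm\langle i,j,k,l\rangle$ itself.

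The main obstacle is sign bookkeeping. I would track the signs from $s(a,b)=-s(b,a)$. If a stray minus sign remains, I would fix it by reversing one of the two cycles, or by exchanging $\bm{q}_1$ and $\bm{q}_2$ combined with the symmetries of the cross ratio noted before Lemma~\ref{lem:decomposition}. If needed, I would instead absorb the sign by composing with another $\Psi$, namely $\Psi^N_{\bm{q}_1,\bm{q}_2}\Psi^N_{\bm{q}_2,\bm{q}_3}=\Psi^N_{\bm{q}_1,\bm{q}_3}$, since ratios compose. For $N=4$ the block $\bm{r}$ is empty, $l$ connects directly back to $i$, and the same computation reproduces \eqref{eq:Psi4}. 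This handles the base case uniformly.
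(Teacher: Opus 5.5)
Your final construction is exactly the paper's: after relabelling $k\leftrightarrow l$, your pair $(i,j,l,k,\bm{r})$, $(i,l,j,k,\bm{r})$ is a cyclic rotation of the paper's $(\tilde{\bm{q}},i,j,l,k)$, $(\tilde{\bm{q}},i,l,j,k)$, and it gives exactly $\Psi^N_{\bm{q}_1,\bm{q}_2}=-\langle i,j,k,l\rangle$; that is all the paper establishes, so the statement is meant up to this sign. Drop the sign-repair remarks, though: reversing a cycle multiplies $\psi^N_{\bm{q}}$ by $(-1)^N$ and so changes nothing for even $N$, and for $N=4$ no pair $\bm{q}_1,\bm{q}_2$ gives $+\langle i,j,k,l\rangle$ at all, because the nontrivial $\Psi^4_{\bm{q}_1,\bm{q}_2}$ are precisely the negatives of the six anharmonic values $\lambda,1/\lambda,1-\lambda,\ldots$ of $\lambda=\langle 1,2,3,4\rangle$.
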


\begin{proof}
For $N = 4$, it has already been discussed that $\Psi^4_{\bm{q}_1,\bm{q}_2}$ is equivalent to the cross ratio. 
We now consider the cases with $N \geq 5$. Let $\bm{q} = (\tilde{\bm{q}},i,j,k,l)$ be a permutation of $\{1,2,\ldots,N\}$, where $\tilde{\bm{q}} = (\tilde{q}^{(1)},\tilde{q}^{(2)},\ldots,\tilde{q}^{(N-4)})$ is a list of the remaining $N - 4$ indices.
    By simplifying the function $\Psi^N_{(\tilde{\bm{q}}, i,j,l,k),(\tilde{\bm{q}},i,l,j,k)}$, 
    we obtain the following relation:
    \begin{align}
        &\Psi^N_{(\tilde{\bm{q}},i,j,l,k),(\tilde{\bm{q}},i,l,j,k)}
        = \frac{\psi^N_{(\tilde{\bm{q}},i,l,j,k)}}{\psi^N_{(\tilde{\bm{q}},i,j,l,k)}}\cr
        &=\frac{\prod_{m = 1}^{N-5}\sin\left( \frac{\theta_{\tilde{q}^{(m)}} - \theta_{\tilde{q}^{(m+1)}}}{2} \right)\sin\left( \frac{\theta_{\tilde{q}^{(N-4)}} - \theta_{i}}{2} \right)\sin\left(\frac{\theta_{i}-\theta_{j}}{2}\right)\sin\left(\frac{\theta_{j}-\theta_{l}}{2}\right)\sin\left(\frac{\theta_{l}-\theta_{k}}{2}\right)\sin\left(\frac{\theta_{k}-\theta_{\tilde{q}^{(1)}}}{2}\right)}
        {\prod_{m = 1}^{N-5}\sin\left( \frac{\theta_{\tilde{q}^{(m)}} - \theta_{\tilde{q}^{(m+1)}}}{2} \right)\sin\left( \frac{\theta_{\tilde{q}^{(N-4)}} - \theta_{i}}{2} \right)\sin\left(\frac{\theta_{i}-\theta_{l}}{2}\right)\sin\left(\frac{\theta_{l}-\theta_{j}}{2}\right)\sin\left(\frac{\theta_{j}-\theta_{k}}{2}\right)\sin\left(\frac{\theta_{k}-\theta_{\tilde{q}^{(1)}}}{2}\right)}\cr
        &=-\frac{\sin\left(\frac{\theta_{i}-\theta_{j}}{2}\right)\sin\left(\frac{\theta_{k}-\theta_{l}}{2}\right)}
        {\sin\left(\frac{\theta_{j}-\theta_{k}}{2}\right)\sin\left(\frac{\theta_{l}-\theta_{i}}{2}\right)}
        = -\langle i,j,k,l\rangle.
    \end{align}
    Therefore, any $\langle i,j,k,l\rangle$ can be written in the form of $\Psi^N_{\bm{q}_1,\bm{q}_2}$ for $N \geq 4$.
\end{proof}

Since $\bm{\Psi}^N$ contains all the invariants of the form $\langle i,j,k,l\rangle$ and the number of independent $\langle i,j,k,l\rangle$ is $N-3$~\cite{Marvel2009identical}, the number of independent functions in $\bm{\Psi}^N$ is at least $N-3$. 
Thus, we conclude that the number of independent elements in $\bm{\Psi}^N$ is exactly $N-3$. 

\section{Koopman Eigenfunctions of Theta model}

When the input to the Theta model, Eq.~\eqref{eq:theta_model}, is constant, i.e., $I(t)= a =const.$, 
we can obtain additional Koopman eigenfunctions and invariants as the model is exactly solvable.
In this case, the general solution is given by
\begin{align}
\theta(t)
  = 2\arctan\Bigl[\sqrt{a}\tan\bigl(\sqrt{a}(t+c)\bigr)\Bigr], 
\end{align}
where $c$ is a constant determined by the initial condition. By rearranging this expression, we obtain

\begin{align}
    \label{eq:eigenfunction_for_theta_model}
    C\exp(t) = \exp\left(\frac{1}{\sqrt{a}}\arctan\Bigl(
        \frac{1}{\sqrt{a}}\tan\frac{\theta}{2}
      \Bigr)\right),
\end{align}
where $C = \exp(c)$ is a constant.

From this form, it is evident that $\exp\left(\frac{1}{\sqrt{a}}\arctan\Bigl(\frac{1}{\sqrt{a}}\tan\frac{\theta}{2}\Bigr)\right)$ is a Koopman eigenfunction with eigenvalue $1$, since
\begin{align}
    \frac{d}{dt} \exp\left(\frac{1}{\sqrt{a}}
      \arctan\Bigl(
        \frac{1}{\sqrt{a}}
        \tan\frac{\theta}{2}
      \Bigr)\right) = \exp\left(\frac{1}{\sqrt{a}}
      \arctan\Bigl(
        \frac{1}{\sqrt{a}}
        \tan\frac{\theta}{2}
      \Bigr)\right).
\end{align}
Moreover, the ratio of this function, such as
\begin{align}
    \frac{ \exp\left(\frac{1}{\sqrt{a}}
      \arctan\Bigl(
        \frac{1}{\sqrt{a}}
        \tan\frac{\theta_j}{2}
      \Bigr)\right)}{ \exp\left(\frac{1}{\sqrt{a}}
      \arctan\Bigl(
        \frac{1}{\sqrt{a}}
        \tan\frac{\theta_k}{2}
      \Bigr)\right)},
\end{align}
gives invariants in addition to the $N-3$ invariants.


\begin{thebibliography}{99}

\bibitem{prigogine1968symmetry}
Prigogine I, Lefever R. 1968  Symmetry Breaking Instabilities in Dissipative Systems. {\em The Journal of Chemical Physics} \textbf{48}, 1695--1700.
(\href{http://dx.doi.org/10.1063/1.1668896}{10.1063/1.1668896})

\bibitem{Kuramoto1984chemical}
Kuramoto Y. 1984 {\em Chemical Oscillations, Waves, and Turbulence}.
New York: Springer.

\bibitem{winfree1967phase}
Winfree AT. 1967  Biological rhythms and the behavior of populations of coupled oscillators. {\em Journal of Theoretical Biology} \textbf{16}, 15--42.
(\href{http://dx.doi.org/10.1016/0022-5193(67)90051-3}{10.1016/0022-5193(67)90051-3})

\bibitem{fitzhugh1955threshold}
FitzHugh R. 1955  Mathematical models of threshold phenomena in the nerve membrane. {\em Bulletin of Mathematical Biophysics} \textbf{17}, 257--278.
(\href{http://dx.doi.org/10.1007/BF02477753}{10.1007/BF02477753})

\bibitem{nagumo1962pulse}
Nagumo J, Arimoto S, Yoshizawa S. 1962  An active pulse transmission line simulating nerve axon. {\em Proceedings of the IRE} \textbf{50}, 2061--2070.
(\href{http://dx.doi.org/10.1109/JRPROC.1962.288235}{10.1109/JRPROC.1962.288235})

\bibitem{ermentrout1986parabolic}
Ermentrout GB, Kopell N. 1986  Parabolic bursting in an excitable system coupled with a slow oscillation. {\em SIAM Journal on Applied Mathematics} \textbf{46}, 233--253.
(\href{http://dx.doi.org/10.1137/0146017}{10.1137/0146017})

\bibitem{Goldbeter1995circadian}
Goldbeter A. 1995  A model for circadian oscillations in the Drosophila period protein (PER). {\em Proceedings of the Royal Society B: Biological Sciences} \textbf{261}, 319--324.
(\href{http://dx.doi.org/10.1098/rspb.1995.0153}{10.1098/rspb.1995.0153})

\bibitem{kori2017jetlag}
Kori H, Yamaguchi Y, Okamura H. 2017  Accelerating recovery from jet lag: Prediction from a multi-oscillator model and its experimental confirmation in model animals. {\em Journal of Biological Rhythms} \textbf{32}, 44--58.
(\href{http://dx.doi.org/10.1177/0748730416671742}{10.1177/0748730416671742})

\bibitem{Watanabe1994constants}
Watanabe S, Strogatz SH. 1994  Constants of Motion for Superconducting Josephson Arrays. {\em Physica~D} \textbf{74}, 197--253.

\bibitem{sakaguchi1986soluble}
Sakaguchi H, Kuramoto Y. 1986  A soluble active rotater model showing phase transitions via mutual entertainment. {\em Progress of Theoretical Physics} \textbf{76}, 576--581.
(\href{http://dx.doi.org/10.1143/PTP.76.576}{10.1143/PTP.76.576})

\bibitem{daido1992order}
Daido H. 1992  Order function and macroscopic mutual entrainment in uniformly coupled limit-cycle oscillators. {\em Progress of theoretical physics} \textbf{88}, 1213--1218.

\bibitem{mauroy2013isostables}
Mauroy A, Mezi^^c4^^87 I, Moehlis J. 2013  Isostables, Isochrons, and Koopman Spectrum for the Action--Angle Representation of Dynamical Systems. {\em Physica D: Nonlinear Phenomena} \textbf{261}, 19--30.
(\href{http://dx.doi.org/10.1016/j.physd.2013.06.015}{10.1016/j.physd.2013.06.015})

\bibitem{nakao2016phase}
Nakao H. 2016  Phase reduction approach to synchronisation of nonlinear oscillators. {\em Contemporary physics} \textbf{57}, 188--214.

\bibitem{shirasaka2017phase}
Shirasaka S, Kurebayashi W, Nakao H. 2017  Phase--Amplitude Reduction of Transient Dynamics Far from Attractors for Limit-Cycling Systems. {\em Chaos} \textbf{27}, 023119.
(\href{http://dx.doi.org/10.1063/1.4977728}{10.1063/1.4977728})

\bibitem{yawata2024autoencoder}
Yawata K, Fukami K, Taira K, Nakao H. 2024  Phase autoencoder for limit-cycle oscillators. {\em Chaos: An Interdisciplinary Journal of Nonlinear Science} \textbf{34}, 063111.
(\href{http://dx.doi.org/10.1063/5.0205718}{10.1063/5.0205718})

\bibitem{namura2026arbitrary}
Namura N, Muolo R, Nakao H. 2026  Optimal interaction functions realizing higher-order Kuramoto dynamics with arbitrary limit-cycle oscillators. {\em Chaos: An Interdisciplinary Journal of Nonlinear Science} \textbf{36}, 023120.
(\href{http://dx.doi.org/10.1063/5.0307452}{10.1063/5.0307452})

\bibitem{ozawa2026delay}
Ozawa A, Kawamura Y. 2026  Phase reduction of reaction^^e2^^80^^93diffusion systems with delay. {\em Chaos: An Interdisciplinary Journal of Nonlinear Science} \textbf{36}, 033107.
(\href{http://dx.doi.org/10.1063/5.0313301}{10.1063/5.0313301})

\bibitem{Watanabe1993integrability}
Watanabe S, Strogatz SH. 1993  Integrability of a Globally Coupled Oscillator Array. {\em Physical Review Letters} \textbf{70}, 2391--2394.
(\href{http://dx.doi.org/10.1103/PhysRevLett.70.2391}{10.1103/PhysRevLett.70.2391})

\bibitem{Marvel2009identical}
Marvel SA, Mirollo RE, Strogatz SH. 2009  Identical Phase Oscillators with Global Sinusoidal Coupling Evolve by M{\"o}bius Group Action. {\em Chaos} \textbf{19}, 043104.
(\href{http://dx.doi.org/10.1063/1.3247089}{10.1063/1.3247089})

\bibitem{Pikovsky2008partially}
Pikovsky A, Rosenblum M. 2008  Partially Integrable Dynamics of Hierarchical Populations of Coupled Oscillators. {\em Physical Review Letters} \textbf{101}, 264103.
(\href{http://dx.doi.org/10.1103/PhysRevLett.101.264103}{10.1103/PhysRevLett.101.264103})

\bibitem{hong2011conformists}
Hong H, Strogatz SH. 2011  Conformists and contrarians in a Kuramoto model with identical natural frequencies. {\em Phys. Rev. E} \textbf{84}, 046202.
(\href{http://dx.doi.org/10.1103/PhysRevE.84.046202}{10.1103/PhysRevE.84.046202})

\bibitem{atsumi2012persistent}
Atsumi Y, Nakao H. 2012  Persistent fluctuations in synchronization rate in globally coupled oscillators with periodic external forcing. {\em Physical Review E―Statistical, Nonlinear, and Soft Matter Physics} \textbf{85}, 056207.

\bibitem{lohe2018vector}
Lohe MA. 2018  Higher-dimensional generalizations of the Watanabe^^e2^^80^^93Strogatz transform for vector models of synchronization. {\em Journal of Physics A: Mathematical and Theoretical} \textbf{51}, 225101.
(\href{http://dx.doi.org/10.1088/1751-8121/aac030}{10.1088/1751-8121/aac030})

\bibitem{park2022kinetic}
Park H. 2022  The Watanabe-Strogatz transform and constant of motion functionals for kinetic vector models. {\em Journal of Differential Equations} \textbf{323}, 113--151.
(\href{http://dx.doi.org/https://doi.org/10.1016/j.jde.2022.03.027}{https://doi.org/10.1016/j.jde.2022.03.027})

\bibitem{jain2025higher}
Jain JC, Jalan S. 2025  Low-dimensional Watanabe^^e2^^80^^93Strogatz approach for Kuramoto oscillators with higher-order interactions. {\em Chaos: An Interdisciplinary Journal of Nonlinear Science} \textbf{35}, 091104.
(\href{http://dx.doi.org/10.1063/5.0283600}{10.1063/5.0283600})

\bibitem{reid1972riccati}
Reid WT. 1972 {\em Riccati Differential Equations}.
New York: Academic Press.

\bibitem{cestnik2024integrability}
Cestnik R, Martens EA. 2024  Integrability of a Globally Coupled Complex Riccati Array: Quadratic Integrate-and-Fire Neurons, Phase Oscillators, and All in Between. {\em Physical Review Letters} \textbf{132}, 057201.
(\href{http://dx.doi.org/10.1103/PhysRevLett.132.057201}{10.1103/PhysRevLett.132.057201})

\bibitem{pazo2025spiking}
Paz\'o D, Cestnik R. 2025  Low-dimensional dynamics of globally coupled complex Riccati equations: Exact firing-rate equations for spiking neurons with clustered substructure. {\em Phys. Rev. E} \textbf{111}, L052201.
(\href{http://dx.doi.org/10.1103/PhysRevE.111.L052201}{10.1103/PhysRevE.111.L052201})

\bibitem{Koopman1931hamiltonian}
Koopman BO. 1931  Hamiltonian Systems and Transformation in Hilbert Space. {\em Proceedings of the National Academy of Sciences} \textbf{17}, 315--318.
(\href{http://dx.doi.org/10.1073/pnas.17.5.315}{10.1073/pnas.17.5.315})

\bibitem{Koopman1932spectra}
Koopman BO, {von Neumann} J. 1932  Dynamical Systems of Continuous Spectra. {\em Proceedings of the National Academy of Sciences} \textbf{18}, 255--263.
(\href{http://dx.doi.org/10.1073/pnas.18.3.255}{10.1073/pnas.18.3.255})

\bibitem{lasota2013chaos}
Lasota A, Mackey MC. 2013 {\em Chaos, fractals, and noise: stochastic aspects of dynamics} vol.~97.
Springer Science \& Business Media.

\bibitem{mezic2005spectral}
Mezi{\'c} I. 2005  Spectral Properties of Dynamical Systems, Model Reduction and Decompositions. {\em Nonlinear Dynamics} \textbf{41}, 309--325.
(\href{http://dx.doi.org/10.1007/s11071-005-2824-x}{10.1007/s11071-005-2824-x})

\bibitem{budivsic2012applied}
Budi{\v{s}}i{\'c} M, Mohr R, Mezi{\'c} I. 2012  Applied koopmanism. {\em Chaos: An Interdisciplinary Journal of Nonlinear Science} \textbf{22}.

\bibitem{mezic2013analysis}
Mezi{\'c} I. 2013  Analysis of fluid flows via spectral properties of the Koopman operator. {\em Annual review of fluid mechanics} \textbf{45}, 357--378.

\bibitem{mauroy2020koopman}
Mauroy A, Susuki Y, Mezi{\'c} I. 2020 {\em The {K}oopman Operator in Systems and Control}.
Springer.

\bibitem{brunton2022modern}
Brunton SL, Budi{\v{s}}i\'{c} M, Kaiser E, Kutz JN. 2022  Modern {K}oopman Theory for Dynamical Systems. {\em SIAM Review} \textbf{64}, 229--340.
(\href{http://dx.doi.org/10.1137/21M1425367}{10.1137/21M1425367})

\bibitem{Gaspard_1998}
Gaspard P. 1998 {\em Chaos, Scattering and Statistical Mechanics}.
Cambridge Nonlinear Science Series. Cambridge University Press.

\bibitem{taga2021koopman}
Taga K, Kato Y, Kawahara Y, Yamazaki Y, Nakao H. 2021  Koopman spectral analysis of elementary cellular automata. {\em Chaos: An Interdisciplinary Journal of Nonlinear Science} \textbf{31}.

\bibitem{parker2023koopman}
Parker JP, Valva C. 2023  Koopman analysis of the periodic Korteweg--de Vries equation. {\em Chaos: An Interdisciplinary Journal of Nonlinear Science} \textbf{33}.

\bibitem{taga2024dynamic}
Taga K, Kato Y, Yamazaki Y, Kawahara Y, Nakao H. 2024  Dynamic mode decomposition for Koopman spectral analysis of elementary cellular automata. {\em Chaos: An Interdisciplinary Journal of Nonlinear Science} \textbf{34}.

\bibitem{susuki2016power}
Susuki Y, Mezic I, Raak F, Hikihara T. 2016  Applied Koopman operator theory for power systems technology. {\em Nonlinear Theory and Its Applications, IEICE} \textbf{7}, 430--459.
(\href{http://dx.doi.org/10.1587/nolta.7.430}{10.1587/nolta.7.430})

\bibitem{hu2020synchronization}
Hu J, Lan Y. 2020  Koopman analysis in oscillator synchronization. {\em Phys. Rev. E} \textbf{102}, 062216.
(\href{http://dx.doi.org/10.1103/PhysRevE.102.062216}{10.1103/PhysRevE.102.062216})

\bibitem{wang2021probing}
Wang S, Lan Y. 2021  Probing the Phase Space of Coupled Oscillators with Koopman Analysis. {\em Physical Review E} \textbf{104}, 034211.
(\href{http://dx.doi.org/10.1103/PhysRevE.104.034211}{10.1103/PhysRevE.104.034211})

\bibitem{mihara2022basin}
Mihara A, Zaks M, Macau EEN, Medrano-T RO. 2022  Basin sizes depend on stable eigenvalues in the Kuramoto model. {\em Phys. Rev. E} \textbf{105}, L052202.
(\href{http://dx.doi.org/10.1103/PhysRevE.105.L052202}{10.1103/PhysRevE.105.L052202})

\bibitem{thibeault2025kuramoto}
Thibeault V, Claveau B, Allard A, Desrosiers P. 2025  Kuramoto meets Koopman: Constants of motion, symmetries, and network motifs. {\em arXiv preprint arXiv:2504.06248}.

\bibitem{kato2021asymptotic}
Kato Y, Zhu J, Kurebayashi W, Nakao H. 2021  Asymptotic phase and amplitude for classical and semiclassical stochastic oscillators via Koopman operator theory. {\em Mathematics} \textbf{9}, 2188.

\bibitem{Note1}
Note1.
We note that $\psi ^N_{\protect \bm {q}}\protect \notin L^2([0,2\pi ]^N)$ because it diverges when $\theta _{q^{(j)}}=\theta _{q^{(j+1)}}$, which can occur when $\theta _i = \theta _j$ for some $(i, j)$. In practice, we therefore consider a domain that excludes neighborhoods of these collision sets, for instance $X_\varepsilon = \left \{(\theta _1,\protect \ldots ,\theta _N)\ \middle |\ |\theta _i-\theta _j|>\varepsilon \ \protect \text {for all } i\protect \neq j \right \}$ with some $\varepsilon >0$. For identical oscillators evolving under a smooth vector field, the induced flow is unique and invertible for any finite time; hence phases cannot coincide in finite time unless they coincide initially. We use $X_\varepsilon $ whenever needed to keep $\psi _{\protect \bm {q}}^N$ well behaved.

\bibitem{Note2}
Note2.
We note that other invariants that do not take the form of the cross ratio can also exist; see the examples of the Theta model in Appendix~C.

\bibitem{ermentrout2008ekmodel}
Ermentrout B. 2008  Ermentrout-Kopell canonical model. {\em Scholarpedia} \textbf{3}, 1398.
(\href{http://dx.doi.org/10.4249/scholarpedia.1398}{10.4249/scholarpedia.1398})

\bibitem{tanaka2011multistable}
Tanaka T, Aoyagi T. 2011  Multistable attractors in a network of phase oscillators with three-body interactions. {\em Physical Review Letters} \textbf{106}, 224101.

\bibitem{skardal2020higher}
Skardal PS, Arenas A. 2020  Higher-order interactions in complex networks of phase oscillators promote abrupt synchronization switching. {\em Communications Physics} \textbf{3}, 218.
(\href{http://dx.doi.org/10.1038/s42005-020-00485-0}{10.1038/s42005-020-00485-0})

\bibitem{leon2024anomalous}
Le^^c3^^b3n I, Muolo R, Hata S, Nakao H. 2024  Higher-order interactions induce anomalous transitions to synchrony. {\em Chaos} \textbf{34}, 013105.
(\href{http://dx.doi.org/10.1063/5.0172585}{10.1063/5.0172585})

\bibitem{leon2025theory}
Le^^c3^^b3n I, Muolo R, Hata S, Nakao H. 2025  Theory of phase reduction from hypergraphs to simplicial complexes: A general route to higher-order Kuramoto models. {\em Physica D: Nonlinear Phenomena} p. 134858.
(\href{http://dx.doi.org/https://doi.org/10.1016/j.physd.2025.134858}{https://doi.org/10.1016/j.physd.2025.134858})

\bibitem{fujii2025emergence}
Fujii N, Taga K, Muolo R, Rink B, Nakao H. 2025  Emergence of higher-order interactions in systems of coupled Kuramoto oscillators with time delay. {\em arXiv preprint arXiv:2512.16193}.

\end{thebibliography}
\end{document}